\def \C {{\mathcal{C}}}
\def \de {\partial}
\def \H {\mathcal{H}}
\def \HH {\textsc{{H}} }
\def \N {\mathbb{N}}
\def \O {\Omega}
\def \phi {\varphi}
\def \RNu {\mathbb{R}^{N+1}}
\def \RN {\mathbb{R}^N}
\def \R {\mathbb{R}}
\def \l {\lambda}
\def \Cyl {\mathcal{C}}
\def \Ockq {\O_{kq}^c(0)}
\def \Tst {T^*_{kq}}
\def\elle{\mathcal{L}}
\def\erre{\mathbb{R}}
\def \emme {{\mathcal {M}}}
\def\erreu{{\erre^{ {N+1} }}}
\def\inn{\mbox{ in }}
\def\capp{\mathrm{cap\, }}
\newcommand{\tende}{\rightarrow}
\newcommand{\ttende}{\longrightarrow}
\newcommand{\enne} {\mathbb{N}}
\newtheorem{theorem}{Theorem}[section]
\newtheorem{lemma}[theorem]{Lemma}
\newtheorem{corollary}[theorem]{Corollary}
\newtheorem{remark}[theorem]{Remark}
\newtheorem{definition}[theorem]{Definition}
\numberwithin{equation}{section}
\begin{document}

\title[Wiener-Landis criterion for Kolmogorov-type operators]{Wiener-Landis criterion \\ for Kolmogorov-type operators}

\author[A.E. Kogoj]{Alessia Kogoj}
\address{Dipartimento di Scienze Pure e Applicate (DiSPeA), 
				 Universit\`{a} degli Studi di Urbino ``Carlo Bo'', 
				 Piazza della Repubblica, 13 - 61029 Urbino (PU), Italy.
         }
 \email{alessia.kogoj@uniurb.it}
\author[E. Lanconelli]{Ermanno Lanconelli}
\address{Dipartimento di Matematica,
         Universit\`{a} degli Studi di Bologna,
         Piazza di Porta S. Donato, 5 - 40126 Bologna, Italy.
         }
 \email{ermanno.lanconelli@unibo.it}
\author[G. Tralli]{Giulio Tralli}
\address{Dipartimento di Matematica,
         Universit\`{a} degli Studi di Bologna,
         Piazza di Porta S. Donato, 5 - 40126 Bologna, Italy.
         }
 \email{giulio.tralli2@unibo.it}

\subjclass[2010]{35H10, 31C15, 35K65.}
\keywords{Kolmogorov operators, Potential analysis, Wiener test.}

\date{}

\begin{abstract} We establish a necessary and sufficient condition for a boundary point to be regular for the Dirichlet problem related to a class of Kolmogorov-type equations. Our criterion is inspired by two classical criteria for the heat equation: the Evans--Gariepy's Wiener test, and a criterion by Landis expressed in terms of a series of caloric potentials.
\end{abstract}
\maketitle

\section{Introduction}\label{intro}
Aim of this paper is to establish a necessary and sufficient condition for the regularity of a boundary point for the Dirichlet problem related to a class of hypoelliptic evolution equations of Kolmogorov-type. Our criterion is inspired both to the Evans--Gariepy's Wiener test for the heat equation, and to a criterion by Landis, for the heat equation too, expressed in terms of a series of caloric potentials. 

The partial differential operators we are dealing with are of the following type
\begin{equation}\label{KFP}
\elle = \mathrm{div}\left(  A\nabla\right)  +\left\langle B x,\nabla\right\rangle - \partial_t,
\end{equation}
where $A = (a_{i,j})_{i,j = 1, \dots , N}$ and $B= (b_{i,j})_{i,j = 1, \dots , N}$ are $N\times N$ real and constant matrices,  $z=(x,t)=(x_1, \ldots, x_N, t)$ is the point of $\erre^{N+1},$ $\nabla=(\partial_{x_1}, \ldots, \partial_{x_N}),$  $\mathrm{div}$ and $\langle\ , \ \rangle$ stand for the gradient, the divergence and the inner product in 
$\erre^{N}$, respectively.

The matrix $A$ is supposed to be symmetric and positive semidefinite. Moreover, letting
$$E(s): =\exp\left(-sB\right),\quad s\in\erre,$$
we assume that the following Kalman condition is satisfied: the matrix 
$$C(t)=\int_{0}^{t} E(s)AE^T(s)\,ds,$$
 is strictly positive definite for every $t>0$. As it is quite well known, the condition $C(t)>0$ for $t>0$ is equivalent to the hypoellipticity of $\elle$ in \eqref{KFP}, i.e to the smoothness of $u$ whenever $\elle u$ is smooth (see, e.g., \cite{LP}).  We also assume the operator $\elle$ to be homogeneous of degree two with respect to a group of dilations in $\erreu$. As we will recall in Section \ref{operatori}, this is equivalent to assume $A$ and $B$ taking the blocks form \eqref{A} and \eqref{B}.
 
 Under the above assumptions, one can apply results and techniques from potential theory in abstract Harmonic Spaces, as presented, e.g, in \cite{Cin}.  As a consequence, for every bounded open set $\Omega\subseteq \erreu$ and for every function $f\in C(\partial\Omega,\erre)$, the Dirichlet problem 
 \begin{equation}\label{DP} 
 \elle   u= 0 \mbox{ in } \Omega, \qquad 
  u|_{\partial \Omega} = f, 
\end{equation}
has a {\it generalized solution} $H_f^\Omega$ in the sense of Perron--Wiener--Brelot--Bauer. The function $H_f^\Omega$ is smooth and solves the equation in \eqref{DP} in the classical sense. However, it may occur that $H_f^\Omega$ does not assume the boundary datum. A point $z_0\in\partial\Omega$ is called {\it $\elle$-regular} for $\Omega$ if 

$$\lim_{z\tende z_0} H_f^\Omega(z)=f (z_0)\quad\forall\ f \in C(\partial\Omega, \erre).$$ 

Aim of this paper is to obtain a characterization of the $\elle$-regular boundary points in terms of a serie involving $\elle$-potentials of regions in $\Omega^c$, the complement of $\Omega$, within different level sets of $\Gamma$, the fundamental solution of $\elle$. More precisely, if $z_0\in\partial\Omega$ and $\lambda\in ]0,1[$ are fixed, we define for $k\in\N$
$$\O_{k}^c(z_0)=\left\{z \in \Omega^c \,:\,\left(\frac{1}{\l}\right)^{k\log{k}}\leq\Gamma(z_0,z) \leq\left(\frac{1}{\l}\right)^{(k+1)\log{(k+1)}}\right\} \cup \{z_0\}.$$
Then, our main result is the following
\begin{theorem}\label{iff} Let $\Omega$ be a bounded open  subset of $\erreu$ and let $z_0\in\partial\Omega.$ Then $z_0$ is $\elle$-regular for $\de \Omega$  if and only if
\begin{equation} \label{iffeqn} \sum_{k=1}^{\infty}{V_{\Omega^c_k(z_0)}(z_0)}=+\infty.\end{equation} 
\end{theorem}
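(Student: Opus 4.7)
The plan is to adapt the Landis--Evans--Gariepy strategy for the heat equation to the Kolmogorov setting, relying on the scaling invariance of $\elle$ under its intrinsic group of dilations and on standard potential theory in Bauer harmonic spaces. I first recall the basic dictionary: $z_0$ is $\elle$-regular for $\O$ iff $\O^c$ is not $\elle$-thin at $z_0$, i.e., no $\elle$-superharmonic function $u$ can satisfy $u(z_0)<\liminf_{z\to z_0,\, z\in\O^c\setminus\{z_0\}}u(z)$. I also fix the convention that $V_E=\widehat{R}_{\Gamma(z_0,\cdot)}^{E}$ is the $\elle$-balayage of $\Gamma(z_0,\cdot)$ on $E$, so that $V_E\leq\Gamma(z_0,\cdot)$ everywhere, $V_E=\Gamma(z_0,\cdot)$ quasi-everywhere on $E$, and $V_E$ is $\elle$-harmonic outside $E$.

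\emph{If the series converges, then $z_0$ is not regular.} Assume $\sum_k V_{\O_k^c(z_0)}(z_0)<\infty$ and set
$$W(z):=\sum_{k=1}^{\infty}V_{\O_k^c(z_0)}(z).$$
As a monotone limit of nonnegative $\elle$-superharmonic functions which is finite at $z_0$, $W$ is $\elle$-superharmonic. For $z\in\O_k^c(z_0)$, one has $W(z)\geq V_{\O_k^c(z_0)}(z)=\Gamma(z_0,z)\geq (1/\l)^{k\log k}$, so $W(z)\to+\infty$ as $z\to z_0$ through $\O^c$. Hence $W$ witnesses $\elle$-thinness of $\O^c$ at $z_0$, and $z_0$ is not $\elle$-regular.

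\emph{If the series diverges, then $z_0$ is regular.} I argue by contrapositive: if $z_0$ is not regular, the thinness of $\O^c$ at $z_0$ produces an $\elle$-potential $u$ with $u(z_0)<\infty$ and $u\geq c>u(z_0)$ on a neighborhood of $z_0$ in $\O^c$. By comparing $u$ with the balayage $V_{\O_k^c(z_0)}$ on each shell, using the minimum principle, the scaling of $\Gamma$, and sharp bounds on $V_{\O_k^c(z_0)}$ inside and outside its shell, one derives $V_{\O_k^c(z_0)}(z_0)\leq C\, c_k(u)$ with $\sum_k c_k(u)<\infty$ (controlled by $u(z_0)$). This contradicts \eqref{iffeqn}.

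\emph{Main obstacle.} The crucial technical point is the sharp two-sided control of $V_{\O_k^c(z_0)}$ inside and outside its shell, together with the verification that the weighting $(1/\l)^{k\log k}$ is precisely calibrated to produce summable contributions across shells in the intrinsic scaling of the Kolmogorov group. The required level-set comparisons for $\Gamma$ must be performed in the anisotropic, non-symmetric geometry induced by the drift $\langle Bx,\nabla\rangle-\partial_t$, which is where the hypothesis that $\elle$ be homogeneous of degree two under the group of dilations plays its decisive role.
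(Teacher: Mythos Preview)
Your proposal rests on a misreading of the paper's notation. In this paper, $V_F$ is the \emph{equilibrium potential} of $F$, i.e.\ the balayage of the constant function $1$:
\[
V_F=\widehat{R}^F_1=\Gamma_{\mu_F},\qquad \mu_F\ \text{the equilibrium measure},\qquad V_F\le 1\ \text{everywhere.}
\]
It is \emph{not} the balayage of $\Gamma(z_0,\cdot)$. With the correct definition, your argument for the necessary direction collapses: since each $V_{\Omega^c_k(z_0)}\le 1$, the function $W=\sum_k V_{\Omega^c_k(z_0)}$ does not blow up as $z\to z_0$ through $\Omega^c$; on the contrary, at any point $z$ only the (at most two) shells containing $z$ contribute $1$, and the remaining terms are typically small. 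So $W$ cannot witness thinness. The paper's proof of necessity proceeds quite differently: it uses the characterization $z_0$ regular $\Leftrightarrow \lim_{r\to 0}V_{G_r}(z_0)>0$, splits $G_r=G_r^p\cup G_r^{*p}$ along a high level set of $\Gamma(z_0,\cdot)$, shows the $G_r^{*p}$-piece is negligible by a capacity bound, and then uses $G_r^p\subset\bigcup_{k\ge p}\Omega^c_k(z_0)$ together with subadditivity to bound $V_{G_r^p}(z_0)$ by the tail of the series.

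For the sufficient direction, what you wrote is a statement of intent rather than a proof; the phrase ``one derives $V_{\Omega^c_k(z_0)}(z_0)\le C\,c_k(u)$ with $\sum_k c_k(u)<\infty$'' hides exactly the hard part. The paper's argument is substantially more delicate than a single comparison with a thinness-witness $u$. Its key mechanism is a Landis-type lemma (Lemma~\ref{per0}): if one can produce disjoint compacts $F_k\subset G_r$ shrinking to $z_0$ with $\sum_k V_{F_k}(z_0)=\infty$ \emph{and} a uniform bound $\Gamma(z,\zeta)/\Gamma(z_0,\zeta)\le M_0$ for $z\in F_h$, $\zeta\in F_k$, $h\ne k$, then $V_{G_r}(z_0)\ge 1/(2M_0)$. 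Producing such $F_k$ from the $\Omega^c_k(z_0)$ is nontrivial: one must thin the sequence (keep only every $q$-th shell, $q$ large), then split each retained shell $\Omega^c_{kq}(0)=F_k^{(0)}\cup F_k$ by a time-threshold $T^*_{kq}$, discard the near part $F_k^{(0)}$ (whose potentials are summable by a capacity estimate), and verify the ratio bound on the far parts via the crucial estimate of Lemma~\ref{rapporto}. It is precisely this ratio control---not the minimum principle against a single superharmonic $u$---that makes the superlinear weighting $\lambda^{-k\log k}$ (rather than $\lambda^{-k}$) the right choice. None of this structure appears in your sketch.
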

Here and in what follows,  if $F$ is a compact subset of $\erreu$, $V_F$ will denote the {\it $\elle$-equilibrium potential} of $F$, and $\mathrm{cap\,}(F)$ will denote its {\it $\elle$-capacity}. We refer to Section \ref{recalls} for the precise definitions.

From Theorem \ref{iff}, one easily obtains a corollary resembling the Wiener test for the classical Laplace and Heat operators. 
\begin{corollary}\label{corol} Let $\Omega$ be a bounded open subset of $\erreu$ and $z_0 \in \de\Omega$. The following statements hold:
\begin{itemize} \item[$(i)$] if 
$$\sum_{k=1}^{\infty}{\frac{\mathrm{cap\, } (\Omega^c_k(z_0))}{\l^{k\log{k}}}}=+\infty$$ 
then $z_0$ is $\elle$-regular;
\item[$(ii)$]  if $z_0$ is $\elle$-regular then 
$$\sum_{k=1}^{\infty}{\frac{\mathrm{cap\, } (\Omega^c_k(z_0))}{\l^{(k+1)\log{(k+1)}}}}=+\infty.$$
\end{itemize}
\end{corollary}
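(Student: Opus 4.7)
The plan is to reduce both statements of the corollary to Theorem \ref{iff} by comparing the equilibrium potential $V_{\Omega^c_k(z_0)}(z_0)$ with the product of $\mathrm{cap\,}(\Omega^c_k(z_0))$ and the typical size of $\Gamma(z_0,\cdot)$ on the annular set $\Omega^c_k(z_0)$. The key tool, which I will recall from the potential-theoretic background developed in Section \ref{recalls}, is the representation
\begin{equation*}
V_F(z_0) \;=\; \int_F \Gamma(z_0,z)\, d\mu_F(z),
\end{equation*}
where $\mu_F$ is the $\elle$-equilibrium measure of the compact set $F$, supported in $F$, with total mass $\mu_F(F)=\mathrm{cap\,}(F)$. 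The added singleton $\{z_0\}$ inside $\Omega^c_k(z_0)$ is capacity-negligible (singletons do not contribute to $\elle$-capacity in the Kolmogorov setting), so $\mu_F$ effectively lives on the annular region where $\Gamma(z_0,\cdot)$ is controlled.

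First I would take $F=\Omega^c_k(z_0)$ and, directly from the definition of this set, insert the pointwise estimate $\lambda^{-k\log k}\le \Gamma(z_0,z)\le \lambda^{-(k+1)\log(k+1)}$ valid on $\mathrm{supp}(\mu_F)\setminus\{z_0\}$ into the integral representation. Integrating against $\mu_F$ yields the two-sided bound
\begin{equation*}
\frac{\mathrm{cap\,}(\Omega^c_k(z_0))}{\l^{k\log k}} \;\le\; V_{\Omega^c_k(z_0)}(z_0) \;\le\; \frac{\mathrm{cap\,}(\Omega^c_k(z_0))}{\l^{(k+1)\log(k+1)}},
\end{equation*}
which is the comparison that links the two formulations of the test.

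From this comparison both claims follow by a direct application of Theorem \ref{iff}. For (i), the divergence of $\sum_k \mathrm{cap\,}(\Omega^c_k(z_0))/\l^{k\log k}$ forces, via the left inequality, the divergence of $\sum_k V_{\Omega^c_k(z_0)}(z_0)$, hence $z_0$ is $\elle$-regular. For (ii), if $z_0$ is $\elle$-regular then $\sum_k V_{\Omega^c_k(z_0)}(z_0)=+\infty$ by Theorem \ref{iff}, and the right inequality then gives $\sum_k \mathrm{cap\,}(\Omega^c_k(z_0))/\l^{(k+1)\log(k+1)}=+\infty$.

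The only delicate point I anticipate is the justification that the singleton $\{z_0\}$ artificially adjoined in the definition of $\Omega^c_k(z_0)$ does not alter $V_{\Omega^c_k(z_0)}(z_0)$ nor $\mathrm{cap\,}(\Omega^c_k(z_0))$; this should be immediate from the definitions in Section \ref{recalls} together with the fact that singletons have zero $\elle$-capacity. Apart from this verification, the argument is simply the sandwich estimate above followed by a term-by-term comparison of the three series.
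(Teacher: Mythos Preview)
Your proposal is correct and follows essentially the same route as the paper: represent $V_{\Omega^c_k(z_0)}(z_0)$ via the equilibrium measure, use the defining bounds on $\Gamma(z_0,\cdot)$ over $\Omega^c_k(z_0)$ to obtain the two-sided estimate, and then invoke Theorem \ref{iff} term by term. Your additional remark about the singleton $\{z_0\}$ being negligible is a valid observation that the paper leaves implicit.
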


We can make the sufficient condition for the $\elle$-regularity more concrete and more geometrical with the following corollary.
\begin{corollary}\label{corol2} Let $\Omega$ be a bounded open subset of $\erreu$ and $z_0 \in \de\Omega$. If
$$\sum_{k=1}^{\infty}{\frac{|\Omega^c_k(z_0)|}{\,\,\l^{\frac{Q+2}{Q}k\log{k}}\,\,}}=+\infty$$
then $z_0$ is $\elle$-regular. In particular, the $\elle$-regularity of $z_0$ is ensured if $\Omega$ has the exterior $\elle$-cone property at $z_0$.
\end{corollary}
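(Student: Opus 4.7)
The plan is to derive Corollary \ref{corol2} from Corollary \ref{corol}$(i)$ via a capacity-volume inequality. A term-by-term comparison of the two series reduces the first assertion to establishing
\begin{equation*}
\capp(\Omega^c_k(z_0))\,\geq\,C\,|\Omega^c_k(z_0)|\,\l^{-\frac{2}{Q}k\log k}
\end{equation*}
for all large $k$, since this yields $\capp(\Omega^c_k(z_0))/\l^{k\log k}\geq C\,|\Omega^c_k(z_0)|/\l^{\frac{Q+2}{Q}k\log k}$; the hypothesis then forces the capacity series of Corollary \ref{corol}$(i)$ to diverge.

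For the capacity-volume step I would prove the more general bound $\capp(F)\geq C\,|F|\,\tau^{2/Q}$ for every compact $F\subseteq\{z:\Gamma(z_0,z)\geq\tau\}$. The homogeneous structure of $\elle$ recalled in Section \ref{operatori} supplies a natural pseudo-distance $d$ on $\erreu$ with $|B_r|\simeq r^{Q+2}$ and with $\Gamma(z,w)\leq C\,d(z,w)^{-Q}$, so that $\{\Gamma(z_0,\cdot)\geq\tau\}$ is contained in a ball $B_{r_\tau}(z_0)$ with $r_\tau\simeq\tau^{-1/Q}$. I would then test the capacity against $\nu=\alpha\,\chi_F\,dw$ and show that the potential
\begin{equation*}
V^\nu(z)\,=\,\alpha\int_F\Gamma(z,w)\,dw
\end{equation*}
satisfies $V^\nu\leq C\alpha r_\tau^2$ uniformly in $z$. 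The case $d(z,z_0)\leq 2r_\tau$ follows from the radial estimate $\int_{B_{3r_\tau}(z)}\Gamma(z,w)\,dw\lesssim \int_0^{3r_\tau}\rho^{-Q}\cdot\rho^{Q+1}\,d\rho\simeq r_\tau^2$, while the case $d(z,z_0)>2r_\tau$ uses $\Gamma(z,w)\lesssim r_\tau^{-Q}$ on $F$ together with $|F|\leq |B_{r_\tau}|\simeq r_\tau^{Q+2}$. Picking $\alpha\simeq r_\tau^{-2}$ gives $V^\nu\leq 1$ and $\nu(F)\simeq |F|r_\tau^{-2}\simeq|F|\tau^{2/Q}$, which is a competitor in the capacity variational problem. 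Substituting $\tau=\l^{-k\log k}$ produces the bound announced above and completes the first part of the statement.

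For the ``in particular'' clause, the exterior $\elle$-cone property at $z_0$ provides a set $K\subseteq\Omega^c$ invariant under the group dilations based at $z_0$, so that $|K\cap B_r(z_0)|\geq c\,r^{Q+2}$ by homogeneity. The shell $\Omega^c_k(z_0)\cap B_{r_k}(z_0)$ differs from $B_{r_k}(z_0)$ only by the vastly smaller ball $B_{r_{k+1}'}(z_0)$, with $r_{k+1}'/r_k=\l^{((k+1)\log(k+1)-k\log k)/Q}\to 0$, so $|\Omega^c_k(z_0)|\geq c\,r_k^{Q+2}\simeq \l^{\frac{Q+2}{Q}k\log k}$; each term of the volume series is therefore bounded below by a fixed positive constant and the series diverges, triggering the first part. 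The main obstacle in this plan is the capacity-volume inequality: although the scheme is classical for the Euclidean heat equation, verifying the sharp pointwise bounds $\Gamma\lesssim d^{-Q}$ and $|B_r|\simeq r^{Q+2}$ in this degenerate Kolmogorov setting is nontrivial and relies on the Kalman hypothesis and the block form \eqref{A}--\eqref{B} introduced in Section \ref{operatori}.
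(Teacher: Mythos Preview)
Your strategy---reduce to Corollary~\ref{corol}$(i)$ via a capacity--volume bound, then verify the volume condition under the cone hypothesis---matches the paper's, but the execution differs. For the capacity--volume step you build a competitor measure $\nu=\alpha\chi_F\,dw$ and estimate its $\Gamma$-potential through pseudo-distance machinery ($\Gamma\lesssim d^{-Q}$, $|B_r|\simeq r^{Q+2}$, quasi-triangle inequality). The paper instead uses the one-line slicing inequality \eqref{capmes}: any compact $F\subset\erre^N\times[a,b]$ satisfies $\capp(F)\geq|F|/(b-a)$, a consequence of $\capp(A\times\{\tau\})=|A|$ for horizontal slices. Since $\Omega^c_k(z_0)$ sits in a strip of time-width $T_k=(c_N\l^{k\log k})^{2/Q}$, this gives $\capp(\Omega^c_k(z_0))\geq c_N^{-2/Q}|\Omega^c_k(z_0)|\,\l^{-\frac{2}{Q}k\log k}$ immediately. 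Your route is viable, but the ``main obstacle'' you flag (justifying $\Gamma\lesssim d^{-Q}$ and the ball-volume asymptotics in the degenerate Kolmogorov setting) is precisely what the paper's slicing argument avoids.

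For the cone clause your argument has a genuine gap. The superlevel sets $L_\tau=\{\Gamma(z_0,\cdot)\geq\tau\}$ are \emph{not} two-sidedly comparable to pseudo-balls $B_r(z_0)$: they lie entirely in $\{t<t_0\}$, so the sentence ``the shell $\Omega^c_k(z_0)\cap B_{r_k}(z_0)$ differs from $B_{r_k}(z_0)$ only by the vastly smaller ball $B_{r'_{k+1}}(z_0)$'' is false, and $|K\cap B_{r_k}|\geq c\,r_k^{Q+2}$ does not by itself yield $|K\cap L_k|\geq c\,r_k^{Q+2}$ (you only have $L_k\subseteq B_{r_k}$, not the reverse). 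The paper repairs this by working directly with the $\Gamma$-level sets and the exact dilation law: writing $F_r^\theta=\{1/r\leq\Gamma(0,\cdot)\leq\theta/r\}$ and using $F_{r_k}^\theta\cap K_R(B)=\delta_{r_k^{1/Q}}\bigl(F_1^\theta\cap K_{R\,r_k^{-1/Q}}(B)\bigr)$, one gets $|\Omega^c_k(0)|\geq r_k^{(Q+2)/Q}|F_1^\theta\cap K_{R\,r_k^{-1/Q}}(B)|$, and the right-hand intersection contains a fixed nonempty open set for all large $k$. Your idea survives if you replace the metric balls by $\Gamma$-level sets throughout and exploit this dilation identity, but the ball formulation as written does not go through.
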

If $E$ is a subset of either $\RN$ or $\RNu$, $|E|$ stands for the relative Lebesgue measure. Moreover, $Q$ is the homogeneous dimension recalled in Section \ref{operatori}, and the $\elle$-cone property will be defined precisely in Section \ref{proofw}. We just mention here that it is a natural adaptation of the parabolic cone condition to the homogeneities of the operator $\elle$.

\vskip 0.4cm

\noindent Before proceeding, we would like to comment on Theorem \ref{iff} and Corollary \ref{corol}.\\
A boundary point regularity test for the heat equation involving infinite sum of (caloric) potentials was showed by Landis in \cite{La}. A similar test for a Kolmogorov equation in $\mathbb R^3$ was obtained by Scornazzani in \cite{Sc}. Our Theorem \ref{iff} contains, extends, and improves the criterion in \cite{Sc}. The Wiener test for the heat equation was proved by Evans and Gariepy in \cite{EG}. The extension of such a criterion to the Kolmogorov operators \eqref{KFP} is an open, and seemingly difficult, problem. Our Corollary \ref{corol}, which is a straightforward consequence of Theorem \ref{iff}, is a Wiener-type test giving necessary and sufficient conditions which look ``almost the same''. As a matter of fact, in Theorem \ref{iff} we have considered the $\elle$-potentials of the compact sets $\Omega^c_k(z_0)$ which are built by the difference of two consecutive super-level sets of $\Gamma(z_0,\cdot)$. These level sets correspond with the sequence of values $\lambda^{-k\log{k}}$. The exact analogue of the Evans-Gariepy criterion would have required the sequence with integer exponents $\lambda^{-k}$. The presence of the logarithmic term, which makes the growth of the exponents slightly superlinear, is crucial for our proof of Theorem \ref{iff}. Moreover, such presence is also the responsible for the non-equivalence of the necessary and the sufficient condition in Corollary \ref{corol}. To complete our historical comments, we mention that a potential analysis for Kolmogorov operators of the kind \eqref{KFP} first appeared in \cite{Sc}, in \cite{GL90}, and in \cite{LP}. We also  mention that the cone criterion contained in Corollary  \ref{corol2} has been recently proved
in \cite{K16}, where such a boundary regularity test has been showed for classes of operators more general than  \eqref{KFP}.  For further bibliographical notes concerning Wiener-type tests for both classical and \emph{degenerate} operators, we refer the reader to \cite{LTU}.

The paper is organized as follows. In Section \ref{operatori} we show some structural properties of $\elle$ and fix some notations. Section \ref{recalls} is devoted to the potential theory for $\elle$, while in Section \ref{crucialestimate} a crucial estimate of the ratio between the fundamental solution $\Gamma$ at two different poles is proved.  In Section \ref{necessity} the {\it only if} part of Theorem \ref{iff} is proved. The {\it if} part, the core of our paper, is proved in Section \ref{cap}, where the estimates of Section \ref{crucialestimate} play a crucial r\^ole. Section \ref{proofw} is devoted to the proof of Corollary \ref{corol} and Corollary \ref{corol2}.

\section{Structural properties of $\elle$} \label{operatori}

In \cite[Section 1]{LP} it is proved that the operator $\elle$ is left-translation invariant with respect to the Lie group $\mathbb {K}$ whose underlying manifold is $\mathbb{R}^{N+1}$, endowed with the composition law
$$\left(  x,t\right)  \circ\left(  \xi,s\right)  =\left(  \xi+E(s)  x,t+ s \right).$$

Furthermore, a fundamental solution for $\elle$ is given by   

\[
\Gamma\left(  z,\zeta\right)  =\Gamma \left(  \zeta^{-1}\circ z\right)  \text{
for }z,\zeta\in\mathbb{R}^{N+1},
\]
where, 
\begin{equation}
\Gamma \left(z \right)  =\Gamma \left(x,t\right)  =\left\{
\begin{tabular}
[c]{lll}
$0$ & $\text{for }t\leq0,$\\\mbox{}\\
$\frac{\left(  4\pi\right)  ^{-N/2}}{\sqrt{\det C\left(  t\right)  }}
\exp\left(  -\frac{1}{4}\left\langle C^{-1}\left(  t\right)  x,x\right\rangle - t \, \mathrm{tr} B
\right)  $ & $\text{for }t>0.$
\end{tabular}
\ \right. \nonumber
\end{equation}

We assume the operator $\elle$ to be homogeneous of degree two with respect to a group of dilations. This last assumption, together with the hypoellipticity of $\elle$, implies that the matrices $A$ and $B$ take the following form with respect to some basis of $\mathbb{R}^{N}$ 
(see again \cite[Section 1]{LP}):
\begin{equation}\label{A}
A=
\begin{bmatrix}
A_{0} & 0\\
0 & 0
\end{bmatrix}
\end{equation}
for some $p_{0}\times p_{0}$ symmetric and positive definite matrix $A_{0}$ ($p_{0}\leq N$), and
\begin{equation}\label{B}
B=
\begin{bmatrix}
0 & 0 & \ldots & 0 & 0 \\
B_{1}  & 0  & 0  & 0  & 0  \\
0 & B_{2}& \ldots & 0 & 0 \\
\vdots & \vdots & \ddots & \vdots & \vdots\\
0 & 0 & \ldots &  B_{n} & 0 
\end{bmatrix},
\end{equation}
where $B_{j}$ is a $p_{j-1}\times p_{j}$ block with rank $p_{j}$ ($j=1,2,...,n$), $p_{0}\geq p_{1}\geq...\geq p_{n}\geq1$ and $p_{0}+p_{1}+...+p_{n}=N$. For such a choice we have $\mathrm{tr} B =0$, and the family of automorphisms of $\mathbb{K}$ making $\elle$ homogeneous of degree two can be taken as
\begin{eqnarray*}\delta_r:\erre^{N+1}\ttende\erre^{N+1},\quad \delta_r(x,t)&\,=&
\delta_r(x^{(p_0)},x^{(p_1)},\ldots,x^{(p_n)},t)\\&:=&\left(r
x^{(p_0)},r^3
x^{(p_1)},\ldots,r^{2n+1}x^{(p_n)},r^2t\right),\\ x^{(p_i)}\in\erre^{p_i},\quad i=0,\ldots,n ,\quad r>0.\end{eqnarray*}
We denote by $Q+2 \ (=p_{0}+ 3 p_{1}+...+ (2n +1 ) p_{n} + 2 )$ the {\it homogeneous dimension} of $\mathbb{K}$ with respect to $(\delta_r)_{r>0}$. We explicitly remark 
that $Q$ is the homogenous dimension of $\erre^N$ with respect to the dilations 
 \begin{eqnarray*}D_r:\erre^{N}\ttende\erre^{N},\quad D_r(x) = \left(r
x^{(p_0)},r^3
x^{(p_1)},\ldots,r^{2n+1}x^{(p_n)}\right).\end{eqnarray*}

Under these notations, the matrix $C(t)$ and the fundamental solution of $\elle$ with pole at the origin can be written as follows (\cite[Proposition 2.3]{LP}, see also \cite{K82}):
$$C(t)= D_{\sqrt{t}} C(1) D_{\sqrt{t}} $$
and
\begin{equation}
\Gamma \left(x,t\right)  =\left\{
\begin{tabular}
[c]{lll}
$0$ & $\text{for }t\leq0,$\\ \mbox{}\\
$\frac{c_N}{t^{\frac{Q}{2}}} \exp \left(  -\frac{1}{4}  \langle C^{-1} (1) D_{\frac{1}{\sqrt{t}}}  x,  D_{\frac{1}{\sqrt{t}}}  x \rangle \right)  $ & $\text{for } t>0.$
\end{tabular}
\ \right. \nonumber
\end{equation}
We observe that $\Gamma$ is $\delta_r$-homogeneous of degree $-Q$.\\
Throughout the paper we denote by $\left|\cdot\right|$ the Euclidean norms in $\R^N$, $\R^{p_k}$ or $\R$. We also denote, for $x\in\RN$,
$$\left|x\right|_C^2:=\frac{1}{4}\left\langle C^{-1}(1)x,x\right\rangle.$$
For all $x\in\R^N$, we have 
\begin{equation}\label{cnonc}
\left|E(1)x\right|_C^2\geq \sigma_C^2 \left|x\right|^2
\end{equation}
where $4\sigma_C^2$ is the smallest eigenvalue of the positive definite matrix $E^T(1)C^{-1}(1)E(1)$. We recall that the homogeneous norm $\left\|\cdot\right\|:\RN\longrightarrow\R^+$ is a $D_\lambda$-homogeneous function of degree $1$ defined as follows
$$\left\|x\right\|=\sum_{i=0}^n\left|x^{(p_i)}\right|^{\frac{1}{2i+1}},\qquad\mbox{for }x=\left(x^{(p_0)},\ldots,x^{(p_n)}\right)\in\R^{p_0}\times\ldots\R^{p_n}=\RN.$$
We call homogeneous cylinder of radius $r>0$ centered at $0$ the set
$$\Cyl_{r}: =\left\{x\in\RN\,:\,\left\|x\right\|\leq r\right\}\times\left\{t\in\R\,:\,\left|t\right|\leq r^2\right\}=\delta_{r}\left(\Cyl_1\right),$$
and define $\Cyl_r(z_0): =z_0\circ\Cyl_r$. 

\begin{remark}
The norms $\left\|\cdot\right\|$ and $\left|\cdot\right|$ can be compared as follows
\begin{equation}\label{confrhomnonhom}
\sigma\min{\left\{\left|x\right|, \left|x\right|^{\frac{1}{2n+1}}\right\}}\leq\left\|x\right\|\leq (n+1)\max{\left\{\left|x\right|, \left|x\right|^{\frac{1}{2n+1}}\right\}}\qquad \forall\,x\in\RN,
\end{equation}
where $\sigma=\min_{\left|x\right|=1}{\left\|x\right\|}$.\\
Indeed, on one side we simply have
$$\left\|x\right\|\leq \sum_{i=0}^n\left|x\right|^{\frac{1}{2i+1}}\leq (n+1)\max{\left\{\left|x\right|, \left|x\right|^{\frac{1}{2n+1}}\right\}}\qquad \forall\,x\in\RN.$$
On the other hand, for any $x\neq 0$, we get
$$\frac{\left\|x\right\|}{\min{\left\{\left|x\right|,\left|x\right|^{\frac{1}{2n+1}}\right\}}}\geq\sum_{i=0}^n{\frac{\left|x^{(p_i)}\right|^{\frac{1}{2i+1}}}{\left|x\right|^{\frac{1}{2i+1}}}}=\sum_{i=0}^n{\left|\left(\frac{x}{\left|x\right|}\right)^{(p_i)}\right|^{\frac{1}{2i+1}}}=\left\|\frac{x}{\left|x\right|}\right\|\geq\sigma.$$
\end{remark}

\section{Some recalls from Potential Theory for $\elle$: \\ $\elle$-potentials and $\elle$-capacity}\label{recalls} 
We briefly collect here some notions and results from Potential Theory applied to the operator $\elle$. 

For every open set $\Omega\subseteq \erreu$ we denote 
\begin{equation*} \elle(\Omega): = \{ u\in C^\infty(\Omega) \ | \ \elle u=0 \}.
\end{equation*}
and we call $\elle$-{\it harmonic}  in $\Omega$ the functions in $\elle(\Omega).$

We say that a bounded open set $V\subseteq \Omega$ is
{\it $\elle$-regular} if  for every continuous function $\varphi : \partial V \longrightarrow \erre$, there exists a unique
function, $h_\varphi^V$ in $\elle(V)$, continuous in $\overline V$, such that  
$$h_\varphi^V|_{\partial V}=\varphi.$$  
Moreover,  if $\varphi\geq 0$ then $h_\varphi^V \geq 0$ by the minimum principle. 

A function $u: \Omega \ttende ]-\infty, \infty]$ is called {\it  $\elle$-superharmonic in $\Omega$} if 
 \begin{itemize}

\item[$(i)$] $u$ is lower semi-continuous and $u<\infty$ in a dense subset of $\Omega$;

\item[$(ii)$] for every regular set $V$, $\overline V\subseteq \Omega$, and  for every $\varphi \in C( \partial V, 
\erre)$, 
$\varphi \le u|_{\partial V} $,  it follows  $ u \geq h_\varphi^V$ in $V.$
\end{itemize}
We will denote by  $\overline\elle(\Omega)$  the family  of the $\elle$-superharmonic functions in $\Omega$. Since the operator $\elle$ endows $\erreu$ with a structure of $\beta$-harmonic space satisfying the Doob convergence property (see \cite{Manfr, Cin, K16}), 
by the Wiener resolutivity theorem,  for every $f\in C(\de\Omega)$, the Dirichlet problem
\begin{equation*} 
\begin{cases}
 \elle   u= 0 \mbox{ in } \Omega    \\
  u|_{\partial \Omega} = f \end{cases}
\end{equation*}
has a {\it generalized solution in the sense of Perron--Wiener--Bauer--Brelot} given by

$$H_f^\Omega:= \inf\{ u\in {\overline{\elle}} (\Omega) \ | \ \liminf_{\Omega\ni z\tende \zeta} u(z) \geq f(\zeta)\quad \forall\ \zeta\in \partial\Omega\}.$$

The function $H_f^\Omega$ is $C^\infty(\Omega)$ and satisfies $\elle u=0$ in $\Omega$ in the classical sense. However, it is not true, in general, that $H_f^\Omega$ continuously takes the boundary  values prescribed by $f$. A point $z_0\in\partial\Omega$ such that 
 $$\lim_{\Omega\ni z\tende z_0} H_f^\Omega(z)=f(z_0)\quad\mbox{for every }  f\in C(\partial\Omega)$$ is called {\it $\elle$-regular} for $\Omega$.

For our regularity criteria we still need a few more definitions. We denote by $\emme(\erreu)$ the collection of all nonnegative Radon measure on $\erreu$ and we call 
$$\Gamma_\mu(z) :=\int_\erreu \Gamma(z,\zeta)\ d\mu(\zeta),\qquad z\in \erreu,$$ the {\it $\elle$-potential of $\mu$}.

If $F$ is a compact set of $\erreu$ and $\emme(F)$ is the collection of all nonnegative Radon measure on $\erreu$ with support in $F$,
the {\it $\elle$-capacity of $F$} is defined as
$$\mathrm{cap\, } (F) := \sup\{ \mu(\erreu)\ | \ \mu\in\emme(F), \ \Gamma_\mu \le 1 \mbox{ on } \erreu\}.$$
We list some properties of the $\elle$-capacities $\mathrm{cap}$. For every $F$, $F_1$ and $F_2$ compact subsets of $\erreu$, we have:
\begin{itemize} 
\item[$(i)$] $\capp (F) < \infty $;
\item[$(ii)$] if $F_1\subseteq F_2$, then $\capp (F_1) \le \capp (F_2)$;
\item[$(iii)$] $\capp (F_1 \cup F_2) \le \capp(F_1) + \capp (F_2)$;
\item[$(iv)$] $\capp (z_0 \circ F) = \capp ( F) $ for every $z_0\in\erreu$; 
\item[$(v)$] $\capp (\delta_r (F)) = r^{Q} \capp ( F)$ for every $r>0$;
\item[$(vi)$] if $F=A\times\{\tau\}$ for some compact set $A\subset\RN$, then $\capp (F)=|A|$;
\item[$(vii)$] if $F\subset \RN\times [a,b]$, then we have \begin{equation}\label{capmes}\capp (F)\geq \frac{|F|}{b-a}.\end{equation}
\end{itemize} 
The properties $(i)-(v)$ are quite standard, and they follow from the features of $\Gamma$. We want to spend few words on the last two properties. Property $(vi)$ was proved in \cite[Proposizione 5.1]{L73} in the case of the heat operator, namely with the capacity build on the Gauss-Weierstrass kernel. It can be proved verbatim proceeding in our situation: the main tools are the facts that $\Gamma$ has integral $1$ over $\RN$, and it reproduces the solutions of the Cauchy problems. Property $(vii)$ appears to be new even in the classical parabolic case (at least to the best of our knowledge), and it can be deduced readily from $(vi)$. As a matter of fact, if a compact set $F$ lies in a strip $\RN\times [a,b]$, we have 
$$(b-a)\,\capp(F)=\int_{a}^{b}{\capp(F)\,{\rm d}\tau}\geq\int_{a}^{b}{\capp(F\cap\{t=\tau\})\,{\rm d}\tau}=\int_{a}^{b}{|F\cap\{t=\tau\}|\,{\rm d}\tau}=|F|.$$

The last notions we need are the ones of {\it reduced function} and of {\it balayage} of $1$ on $F$. They are respectively defined by
$$W_F:= \inf \{ v\ | v \in {\overline\elle} (\erreu) ,\ v\geq 0 \inn \erreu,\  v\geq 1 \inn F\},$$
and 
$$V_F(z)=\liminf_{\zeta\ttende z} W_F(\zeta),\qquad z\in \erreu.$$
From general balayage theory we have that $V_F$ is less or equal than $1$ everywhere, identically $1$ in the interior of $F$, it vanishes at infinity, is a superharmonic function on $\erreu$ and harmonic on $\erreu\backslash \partial F$. Furthermore, the following properties will be useful for us. Let $F, F_1, F_2$ be compact subsets of $\erreu$, and let $(F_n)_{n\in\enne}$ be a sequence of compact subsets of $\erre^{N+1}$, we have:
\begin{itemize} 
\item[$(i)$] if $F_1\subseteq F_2\subseteq \erre^{N+1}$, then $V_{F_1}\le V_{F_2};$
\item[$(ii)$] $V_{F_1 \cup F_2} \le V_{F_1}+ V_{F_2};$
\item[$(iii)$]  if $F\subseteq \bigcup_{n\in\enne} F_n$, then $V_F\le \sum_{n=1}^\infty V_{F_n}.$
\end{itemize} 
The first property is a consequence of the definition of balayage; for the second and the third one we refer respectively to \cite[Proposition 5.3.1]{CC} and \cite[Theorem 4.2.2 and Corollary 4.2.2]{CC}.

Now, following the same lines of the proof of \cite[Teorema 1.1]{L73}, we have the existence of a unique measure $\mu_F\in \emme(F)$ such that 
\begin{equation}\label{reprgamma}
V_F(z)=\Gamma_{\mu_F}(z)=\int_\erreu{\Gamma(z,\zeta)\,\rm{d}\mu_F(\zeta)}\quad \forall \, z\in\R^{N+1},
\end{equation}
and $$\mu_F(\erreu)=\mathrm{cap\, } (F).$$
$V_F$ is also called the $\elle$-{\it equilibrium potential} of $F$ and $\mu_F$ the $\elle$-{\it equilibrium measure} of $F$. The proof of this fact relies on the good behavior of $\Gamma$, a representation formula of Riesz-type 
for $\elle$-superharmonic functions proved in \cite[Theorem 5.1]{Cin}, and a Maximum Principle for $\elle$ (see \cite[Proposition 2.3]{Cin}).

Fix now a bounded open set $\O$ compactly contained in $\RNu$, and $z_0=(x_0,t_0)\in\de\O$. Let us denote by
$$G_r=\left\{(x,t)\in \Cyl_r(z_0)\smallsetminus \O\,:\,t\leq t_0\right\}.$$
From general balayage theory and proceeding, e.g., as in \cite [Theorem 4.6]{LU}, we can characterize the regularity of the boundary point of $\Omega$ by the following condition:
\begin{center}
the point $z_0\in\partial\Omega$ is $\elle$-regular if and only if
\begin{equation}\label{regularpoint}
\lim_{r\tende 0} V_{G_r} (z_0) > 0.
\end{equation}
\end{center}

\section{A crucial estimate}\label{crucialestimate}
We start by recalling the following identity, whose proof can be found in \cite[Remark 2.1]{LP} (see also \cite{K82}),
\begin{equation}\label{commu}
E(\lambda^2 s)D_\lambda=D_\lambda E(s)\qquad \forall \lambda>0,\, \forall s\in\erre.
\end{equation}

In what follows we will need the following lemma.
\begin{lemma}\label{matinv}
For $0>t>\tau$ we have the following matrix inequality
$$E^T(t)C^{-1}(t-\tau)E(t)\geq C^{-1}(-\tau).$$
\end{lemma}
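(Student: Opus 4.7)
The plan is to reduce the desired inequality to a statement about $C(-\tau)$ and $C(t-\tau)$ by inverting, then exploit the semigroup identity $E(a)E(b)=E(a+b)$ (which holds because both are exponentials of the same matrix $B$) to rewrite $E(-t)C(t-\tau)E^T(-t)$ as a sub-integral of the one defining $C(-\tau)$.

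\medskip

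More precisely, I would first observe that since $M\geq N>0$ (in the Loewner order) implies $N^{-1}\geq M^{-1}$, the claim
$$E^T(t)C^{-1}(t-\tau)E(t)\geq C^{-1}(-\tau)$$
is equivalent to
$$C(-\tau)\geq \bigl(E^T(t)C^{-1}(t-\tau)E(t)\bigr)^{-1}=E(-t)\,C(t-\tau)\,E^T(-t),$$
where I have used $E(t)^{-1}=E(-t)$. Both matrices are positive definite by the Kalman condition (note $t-\tau>0$ and $-\tau>0$), so inversion is legitimate.

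\medskip

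The key step is then a change of variables. Using the definition of $C$ and the commutation $E(-t)E(s)=E(s-t)$ (and its transpose on the right), I would write
$$E(-t)\,C(t-\tau)\,E^T(-t)=\int_{0}^{t-\tau}E(s-t)\,A\,E^T(s-t)\,ds=\int_{-t}^{-\tau}E(u)\,A\,E^T(u)\,du,$$
where $u=s-t$. Since $0>t>\tau$, we have $-\tau>-t>0$, so
$$C(-\tau)-E(-t)\,C(t-\tau)\,E^T(-t)=\int_{0}^{-\tau}E(u)\,A\,E^T(u)\,du-\int_{-t}^{-\tau}E(u)\,A\,E^T(u)\,du=C(-t),$$
which is positive semidefinite (indeed positive definite) since $-t>0$. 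Taking inverses reverses the inequality and gives the claim.

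\medskip

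I do not foresee a genuine obstacle here: once one believes that the desired inequality should follow from additivity of the integral defining $C$, everything reduces to the semigroup property of $s\mapsto E(s)$ and bookkeeping of signs (the hypothesis $0>t>\tau$ is what makes the ``extra piece'' $C(-t)$ a positive quantity rather than a negative one). The only place where one must be careful is in keeping track of the direction of the Loewner inequality under inversion, and in checking that the change of variables produces the integration interval $[-t,-\tau]\subset[0,-\tau]$ rather than a shifted or reversed one.
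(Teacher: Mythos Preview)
Your proof is correct and follows essentially the same route as the paper's: invert the Loewner inequality using $E(t)^{-1}=E(-t)$, then use the semigroup property and a change of variables to show $E(-t)C(t-\tau)E^T(-t)=\int_{-t}^{-\tau}E(u)AE^T(u)\,du\leq C(-\tau)$. Your additional remark that the difference equals exactly $C(-t)$ is a nice sharpening of the paper's observation that the integrand is nonnegative.
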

\begin{proof}
Since for symmetric positive definite matrices we have
$$M_1\leq M_2\quad \Rightarrow\quad M_1^{-1}\geq M_2^{-1}$$
(see \cite[Corollary 7.7.4]{HJ}) and recalling that $E^{-1}(t)=E(-t)$, it is enough to prove that
\begin{equation}\label{matrixclaim}
E(-t)C(t-\tau)E^T(-t)\leq C(-\tau).
\end{equation}
From the very definition of the matrix $C$ we get
\begin{eqnarray*}
E(-t)C(t-\tau)E^T(-t)&=&e^{tB}\left(\int_{0}^{t-\tau}{e^{-sB}Ae^{-sB^T}}\,\rm{d}s\right)e^{tB^T}=\int_{0}^{t-\tau}{e^{(t-s)B}Ae^{(t-s)B^T}}\,\rm{d}s\\
&=&\int_{-t}^{-\tau}{e^{-\sigma B}Ae^{-\sigma B^T}}\,\rm{d}\sigma.
\end{eqnarray*}
Since $-\tau>-t>0$ and $A$ is nonnegative definite, we have 
$$\int_{-t}^{-\tau}{e^{-\sigma B}Ae^{-\sigma B^T}}\,\rm{d}\sigma\leq \int_{0}^{-\tau}{e^{-\sigma B}Ae^{-\sigma B^T}}\,\rm{d}\sigma = C(-\tau)$$
which proves \eqref{matrixclaim} and the lemma.
\end{proof}

A crucial role in the  proof  of our main theorem will be played by the ratio $\frac{\Gamma(z,\zeta)}{\Gamma(0,\zeta)}$, for $z=(x,t)$ and $\zeta=(\xi,\tau)$ with $0>t>\tau$. We use the following notations
$$\mu=\frac{-t}{-\tau}\in(0,1),\qquad M(z)=\left|D_{\frac{1}{\sqrt{-t}}}x\right|,\qquad M(\zeta)=\left|D_{\frac{1}{\sqrt{-\tau}}}\xi\right|.$$

\begin{lemma}\label{rapporto}
There exists a positive constant $C$ such that, for any $z=(x,t),\zeta=(\xi,\tau)$ with $0>t>\tau$ and $\mu\leq\min{\{\frac{1}{2},\frac{\sigma^2}{(n+1)^2}\}}$, we have
$$\frac{\Gamma(z,\zeta)}{\Gamma(0,\zeta)}\leq\left(\frac{1}{1-\mu}\right)^{\frac{Q}{2}}e^{C\sqrt{\mu}M(z)M(\zeta)}.$$
\end{lemma}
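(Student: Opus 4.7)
The plan is to write out both $\Gamma(z,\zeta)$ and $\Gamma(0,\zeta)$ explicitly and isolate the Gaussian cross--term between $x$ and $\xi$. From the group law one computes $\zeta^{-1}=(-E(-\tau)\xi,-\tau)$, whence $\zeta^{-1}\circ z=(x-E(t-\tau)\xi,\,t-\tau)$ and $\zeta^{-1}\circ 0=(-E(-\tau)\xi,\,-\tau)$; since $t-\tau>0$ and $-\tau>0$, the explicit formula for $\Gamma$ on $\{t>0\}$ applies to both. Taking the ratio, the polynomial prefactor is $\bigl(\tfrac{-\tau}{t-\tau}\bigr)^{Q/2}=(1-\mu)^{-Q/2}$, which already matches the statement; everything reduces to bounding the difference of Gaussian exponents
$$\mathcal{E}:=\tfrac{1}{4}\langle C^{-1}(-\tau)E(-\tau)\xi,E(-\tau)\xi\rangle-\tfrac{1}{4}\langle C^{-1}(t-\tau)(x-E(t-\tau)\xi),\,x-E(t-\tau)\xi\rangle$$
by $C\sqrt\mu\,M(z)M(\zeta)$.

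Expanding the second quadratic form splits it into pure $x$, mixed, and pure $\xi$ contributions. The pure $x$ term $-\tfrac{1}{4}\langle C^{-1}(t-\tau)x,x\rangle$ is nonpositive and is dropped. For the two pure $\xi$ terms, I exploit the one-parameter group identity $E(t-\tau)=E(t)E(-\tau)$ and set $\eta:=E(-\tau)\xi$, so that the pure $\xi$ part of the second form becomes $\langle E^{T}(t)C^{-1}(t-\tau)E(t)\eta,\eta\rangle$. Lemma~\ref{matinv} then gives $E^{T}(t)C^{-1}(t-\tau)E(t)\geq C^{-1}(-\tau)$, i.e.\ the pure $\xi$ contribution of $\Gamma(z,\zeta)$ dominates that of $\Gamma(0,\zeta)$, and they cancel with the right sign in $\mathcal{E}$. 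What remains is precisely the mixed term:
$$\mathcal{E}\leq\tfrac{1}{2}\bigl|\langle C^{-1}(t-\tau)x,\,E(t-\tau)\xi\rangle\bigr|.$$

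The last step is dimensional. Using $C^{-1}(t-\tau)=D_{1/\sqrt{t-\tau}}C^{-1}(1)D_{1/\sqrt{t-\tau}}$, the commutation $D_{1/\sqrt{t-\tau}}E(t-\tau)=E(1)D_{1/\sqrt{t-\tau}}$ (an immediate consequence of \eqref{commu}), and the identities $D_{1/\sqrt{t-\tau}}x=D_{\sqrt{\mu/(1-\mu)}}w$, $D_{1/\sqrt{t-\tau}}\xi=D_{1/\sqrt{1-\mu}}y$ with $w=D_{1/\sqrt{-t}}x$, $y=D_{1/\sqrt{-\tau}}\xi$ (so $|w|=M(z)$, $|y|=M(\zeta)$), the cross--term rewrites as $\langle C^{-1}(1)D_{\sqrt{\mu/(1-\mu)}}w,\,E(1)D_{1/\sqrt{1-\mu}}y\rangle$. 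Cauchy--Schwarz combined with the elementary dilation estimates $|D_r v|\leq r|v|$ for $0<r\leq 1$ and $|D_s v|\leq s^{2n+1}|v|$ for $s\geq 1$ produces the factors $\sqrt{\mu/(1-\mu)}\,|w|$ and $(1-\mu)^{-(2n+1)/2}|y|$; the assumption $\mu\leq 1/2$ turns the leftover powers of $(1-\mu)^{-1}$ into absolute constants, which are absorbed into $C$ together with the operator norms of $C^{-1}(1)$ and $E(1)$ (both finite, the second because $B$ is nilpotent). The key (and most delicate) step is identifying the substitution $\eta:=E(-\tau)\xi$ that brings Lemma~\ref{matinv} into play: without the cancellation of the two pure $\xi$ forms, the dilation $D_{1/\sqrt{1-\mu}}$ acting on $y$ would contribute a term quadratic in $M(\zeta)$ to $\mathcal{E}$, with coefficient blowing up like $(1-\mu)^{-(2n+1)}$, which is incompatible with the claimed linear behavior in both $M(z)$ and $M(\zeta)$.
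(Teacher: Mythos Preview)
Your argument is correct and follows the paper's proof up through the key step: writing out the ratio, using Lemma~\ref{matinv} (via the substitution $\eta=E(-\tau)\xi$ and $E(t-\tau)=E(t)E(-\tau)$) to kill the pure--$\xi$ contribution, dropping the negative pure--$x$ term, and reducing $\mathcal{E}$ to the mixed term $\tfrac12\langle C^{-1}(t-\tau)x,E(t-\tau)\xi\rangle$.

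The only difference is in how that mixed term is estimated. The paper applies Cauchy--Schwarz in the $C^{-1}(t-\tau)$ inner product, obtaining the product of $\langle C^{-1}(t-\tau)x,x\rangle^{1/2}$ and $\langle C^{-1}(t-\tau)E(t-\tau)\xi,E(t-\tau)\xi\rangle^{1/2}$, and then bounds each factor by passing through $C^{-1}(1/\mu-1)$, $C^{-1}(1-\mu)$ and the homogeneous--norm comparison \eqref{confrhomnonhom}; this is where the hypothesis $\mu\le\sigma^2/(n+1)^2$ enters. You instead pull the dilations out first, rewriting the mixed term as $\langle C^{-1}(1)D_{\sqrt{\mu/(1-\mu)}}w,\,E(1)D_{1/\sqrt{1-\mu}}y\rangle$, and then use Euclidean Cauchy--Schwarz together with the elementary bounds $|D_r v|\le r|v|$ for $r\le 1$ and $|D_s v|\le s^{2n+1}|v|$ for $s\ge 1$. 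Your route is a bit shorter and, as a bonus, actually uses only $\mu\le\tfrac12$; the extra hypothesis $\mu\le\sigma^2/(n+1)^2$ is never needed in your version.
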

\begin{proof} In our notations we can write
\begin{eqnarray*}
\frac{\Gamma(z,\zeta)}{\Gamma(0,\zeta)}&=&\frac{(t-\tau)^{-\frac{Q}{2}}e^{-\left|D_{\frac{1}{\sqrt{t-\tau}}}\left(x-E(t-\tau)\xi\right)\right|_C^2}}{(-\tau)^{-\frac{Q}{2}}e^{-\left|D_{\frac{1}{\sqrt{-\tau}}}\left(E(-\tau)\xi\right)\right|_C^2}}\\
&=&\left(\frac{1}{1-\mu}\right)^{\frac{Q}{2}}e^{\left|D_{\frac{1}{\sqrt{-\tau}}}\left(E(-\tau)\xi\right)\right|_C^2-\left|D_{\frac{1}{\sqrt{t-\tau}}}\left(x-E(t-\tau)\xi\right)\right|_C^2}.
\end{eqnarray*}
Let us deal with the exponential term
\begin{eqnarray}\label{exponent}
&&\left|D_{\frac{1}{\sqrt{-\tau}}}\left(E(-\tau)\xi\right)\right|_C^2-\left|D_{\frac{1}{\sqrt{t-\tau}}}\left(x-E(t-\tau)\xi\right) \right|_C^2=\\
&=&\frac{1}{4}\left\langle C^{-1}(-\tau)E(-\tau)\xi,E(-\tau)\xi\right\rangle-\frac{1}{4}\left\langle C^{-1}(t-\tau)\left(x-E(t-\tau)\xi\right),\left(x-E(t-\tau)\xi\right)\right\rangle.\nonumber
\end{eqnarray}
Lemma \ref{matinv} says in particular that we have
$$\left\langle C^{-1}(-\tau)E(-\tau)\xi,E(-\tau)\xi\right\rangle-\left\langle C^{-1}(t-\tau) E(t-\tau)\xi,E(t-\tau)\xi\right\rangle\leq 0.$$
Using this in \eqref{exponent} we get
\begin{eqnarray}\label{exponent2}
&&\left|D_{\frac{1}{\sqrt{-\tau}}}\left(E(-\tau)\xi\right)\right|_C^2-\left|D_{\frac{1}{\sqrt{t-\tau}}}\left(x-E(t-\tau)\xi\right) \right|_C^2\leq\\
&\leq&-\frac{1}{4}\left\langle C^{-1}(t-\tau)x,x\right\rangle+\frac{1}{2}\left\langle C^{-1}(t-\tau)x,E(t-\tau)\xi\right\rangle\leq\frac{1}{2}\left\langle C^{-1}(t-\tau)x,E(t-\tau)\xi\right\rangle\nonumber\\
&\leq&\frac{1}{2}\left(\left\langle C^{-1}(t-\tau)x,x\right\rangle\left\langle C^{-1}(t-\tau)E(t-\tau)\xi,E(t-\tau)\xi\right\rangle\right)^{\frac{1}{2}}.\nonumber
\end{eqnarray}
We are going to bound $\left\langle C^{-1}(t-\tau)x,x\right\rangle$ and $\left\langle C^{-1}(t-\tau)E(t-\tau)\xi,E(t-\tau)\xi\right\rangle$ separately. We have
$$\left\langle C^{-1}(t-\tau)x,x\right\rangle=\left\langle C^{-1}\left(\frac{1}{\mu}-1	\right)D_{\frac{1}{\sqrt{-t}}}x,D_{\frac{1}{\sqrt{-t}}}x\right\rangle\leq \left\|C^{-1}\left(\frac{1}{\mu}-1\right)\right\|M^2(z),$$
where $\left\|A\right\|$ stands for the operator norm of a matrix $A$ (i.e. its biggest eigenvalue for symmetric matrices). By \eqref{confrhomnonhom}, for any vector $v$ with $\left|v\right|=1$ we get
$$\min{\left\{\left|D_{\sqrt{\mu}}v\right|,\left|D_{\sqrt{\mu}}v\right|^{\frac{1}{2n+1}}\right\}}\leq\frac{1}{\sigma}\sqrt{\mu}\left\|v\right\|\leq\frac{n+1}{\sigma}\sqrt{\mu}\max{\left\{\left|v\right|,\left|v\right|^{\frac{1}{2n+1}}\right\}}=\frac{n+1}{\sigma}\sqrt{\mu}.$$
From $\mu\leq \frac{\sigma^2}{(n+1)^2}$ we then deduce $\left|D_{\sqrt{\mu}}v\right|\leq \frac{n+1}{\sigma}\sqrt{\mu}$. Hence, since $\mu$ is also less than $\frac{1}{2}$,
\begin{eqnarray*}
\left\langle C^{-1}\left(\frac{1}{\mu}-1\right)v,v \right\rangle&=&\left\langle C^{-1}(1-\mu)D_{\sqrt{\mu}}v,D_{\sqrt{\mu}}v\right\rangle\leq \left\|C^{-1}(1-\mu)\right\|\left|D_{\sqrt{\mu}}v\right|^2\\
&\leq& \frac{(n+1)^2}{\sigma^2}\left\|C^{-1}(1-\mu)\right\|\mu\leq \frac{(n+1)^2}{\sigma^2}\left\|C^{-1}\left(\frac{1}{2}\right)\right\|\mu \quad \forall \left|v\right|=1.
\end{eqnarray*}
This gives
\begin{equation}\label{bx}
\left\langle C^{-1}(t-\tau)x,x\right\rangle\leq \frac{(n+1)^2}{\sigma^2}\left\|C^{-1}\left(\frac{1}{2}\right)\right\|\mu M^2(z).
\end{equation}
On the other hand, by the commutation property \eqref{commu}, we get
\begin{eqnarray*}
&&\left\langle C^{-1}(t-\tau)E(t-\tau)\xi,E(t-\tau)\xi\right\rangle=\left\langle C^{-1}(1-\mu)D_{\frac{1}{\sqrt{-\tau}}}E(t-\tau)\xi,D_{\frac{1}{\sqrt{-\tau}}}E(t-\tau)\xi\right\rangle\\
&\leq&\left\|C^{-1}(1-\mu)\right\|\left|D_{\frac{1}{\sqrt{-\tau}}}E(t-\tau)\xi\right|^2=\left\|C^{-1}(1-\mu)\right\|\left|E(1-\mu)D_{\frac{1}{\sqrt{-\tau}}}\xi\right|^2\\
&\leq&\left\|C^{-1}(1-\mu)\right\|\left\|E^T(1-\mu)E(1-\mu)\right\|M^2(\zeta)\\
&\leq&\left\|C^{-1}\left(\frac{1}{2}\right)\right\|\left\|E^T(1-\mu)E(1-\mu)\right\|M^2(\zeta).
\end{eqnarray*}
Since $0<\mu\leq\frac{1}{2}$, the term $\left\|E^T(1-\mu)E(1-\mu)\right\|$ is bounded from above by a universal constant $C_0^2$. Thus we have
\begin{equation}\label{bexi}
\left\langle C^{-1}(t-\tau)E(t-\tau)\xi,E(t-\tau)\xi\right\rangle\leq C_0^2 \left\|C^{-1}\left(\frac{1}{2}\right)\right\|M^2(\zeta).
\end{equation}
Plugging \eqref{bx} and \eqref{bexi} in \eqref{exponent2}, we get
$$\left|D_{\frac{1}{\sqrt{-\tau}}}\left(E(-\tau)\xi\right)\right|_C^2-\left|D_{\frac{1}{\sqrt{t-\tau}}}\left(x-E(t-\tau)\xi\right) \right|_C^2 \leq\frac{C_0}{2}\frac{n+1}{\sigma}\left\|C^{-1}\left(\frac{1}{2}\right)\right\|\sqrt{\mu}M(z)M(\zeta).$$
Therefore
$$\frac{\Gamma(z,\zeta)}{\Gamma(0,\zeta)}\leq\left(\frac{1}{1-\mu}\right)^{\frac{Q}{2}}e^{C\sqrt{\mu}M(z)M(\zeta)}$$
with $C=\frac{C_0}{2}\frac{n+1}{\sigma}\left\|C^{-1}\left(\frac{1}{2}\right)\right\|$.
\end{proof}

\section{Necessary condition for regularity}\label{necessity} 

The characterization in \eqref{regularpoint}, together with the following lemma, will give the necessity of \eqref{iffeqn} in Theorem \ref{iff}.
\begin{lemma}\label{lochiamiamolemma?} 
For every fixed $p\in\enne$, let us split the set $G_r$ as follows
$$G_r=G_r^p\cup G_r^{*p},$$
where
$$G_r^p=\left\{ z\in G_r \  | \  \Gamma(z_0,z) \geq \left( \frac{1}{\lambda} \right)^{p \log p } \right\}\cup\{z_0\},$$
$$\mbox{and }\quad G_r^{*p}=\left\{ z\in G_r \  | \  \Gamma(z_0,z) \le \left( \frac{1}{\lambda} \right)^{p \log p } \right\}.$$
Then, 
$$\lim_{r\ttende 0} V_{G_r}(z_0)= \lim_{r\ttende 0} V_{G_r^{p}}(z_0).$$
\end{lemma}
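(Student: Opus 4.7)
The plan is to exploit subadditivity of the balayage together with the defining inequality on $G_r^{*p}$ (where $\Gamma(z_0,\cdot)$ is small) and control the equilibrium measure there by a capacity bound.

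First, since $G_r^p \subseteq G_r$, monotonicity of balayage gives $V_{G_r^p}(z_0) \le V_{G_r}(z_0)$, so the limit inequality $\lim_{r\to 0} V_{G_r^p}(z_0) \le \lim_{r\to 0} V_{G_r}(z_0)$ is immediate (both limits exist because as $r$ decreases the sets shrink and the corresponding balayages decrease monotonically). The remaining task is the reverse inequality, and for that I would invoke the subadditivity property $V_{G_r} \le V_{G_r^p} + V_{G_r^{*p}}$ listed in Section \ref{recalls}, and then show that $V_{G_r^{*p}}(z_0) \to 0$ as $r\to 0$.

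For this last step, I would use the Riesz-type representation \eqref{reprgamma} applied to $F=G_r^{*p}$: since the equilibrium measure $\mu_{G_r^{*p}}$ is supported in $G_r^{*p}$, and on that set one has the pointwise bound $\Gamma(z_0,\zeta) \le \lambda^{-p\log p}$ by the very definition of $G_r^{*p}$, it follows that
\begin{equation*}
V_{G_r^{*p}}(z_0) \;=\; \int_{G_r^{*p}} \Gamma(z_0,\zeta)\, d\mu_{G_r^{*p}}(\zeta)
\;\le\; \left(\frac{1}{\lambda}\right)^{p\log p}\mu_{G_r^{*p}}(\erreu)
\;=\; \left(\frac{1}{\lambda}\right)^{p\log p}\capp(G_r^{*p}).
\end{equation*}
To finish, I would bound $\capp(G_r^{*p})\le \capp(G_r)\le \capp(\Cyl_r(z_0))$ by monotonicity (property $(ii)$), then use left-translation invariance and the homogeneity of the capacity (properties $(iv)$ and $(v)$) to get $\capp(\Cyl_r(z_0)) = r^Q \capp(\Cyl_1)$. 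Thus $V_{G_r^{*p}}(z_0) \le \lambda^{-p\log p}\,\capp(\Cyl_1)\, r^Q$, and since $p$ is fixed the right-hand side tends to $0$ as $r\to 0$.

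Combining the two bounds yields $\lim_{r\to 0} V_{G_r}(z_0) \le \lim_{r\to 0} V_{G_r^p}(z_0)$, which completes the proof. There is no real obstacle here: everything follows from the abstract potential-theoretic properties collected in Section \ref{recalls}, the only mildly subtle point being to observe that the level-set restriction $\Gamma(z_0,\cdot)\le \lambda^{-p\log p}$ converts the full balayage into an estimate by the capacity, which then vanishes by the small-cylinder scaling.
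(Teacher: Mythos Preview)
Your proof is correct and follows essentially the same approach as the paper: monotonicity and subadditivity of the balayage give $V_{G_r^{p}}(z_0)\le V_{G_r}(z_0)\le V_{G_r^{p}}(z_0)+V_{G_r^{*p}}(z_0)$, and the representation \eqref{reprgamma} together with the capacity bound $\capp(G_r^{*p})\le\capp(\Cyl_r(z_0))=r^Q\capp(\Cyl_1)$ yields $V_{G_r^{*p}}(z_0)\to 0$. Your added remark on the existence of the limits via monotonicity in $r$ is a nice touch the paper leaves implicit.
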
 
\begin{proof} From the monotonicity and subadditivity properties of the balayage, we have 
$$V_{G_r^{p}}(z_0) \le V_{G_r}(z_0)\le V_{G_r^{p}}(z_0) + V_{G_r^{*p}}(z_0).$$
Furthermore, by \eqref{reprgamma},
$$V_{G_r^{*p}}(z_0)\le \left(\frac{1}{\lambda}\right)^{p \log p} \!\!\!  \!\!\! \mathrm{cap\, } ({G^{*p}_r}).$$
On the other hand, from the monotonicity and homogeneity properties of the capacities, it follows 
$$\mathrm{cap\, } ({G^{*p}_r})  \le  \mathrm{cap\, } ({\C_r}(z_0)) =  \mathrm{cap\, } (z_0\circ \delta_r (\C_1)) = r^Q  \mathrm{cap\, } ({\C_1}(z_0)). $$
Hence $\mathrm{cap\, } ({G^{*p}_r})$ goes to zero as $r$  goes to zero. This proves the lemma.
\end{proof}

\begin{proof}[Proof of necessary condition in Theorem \ref{iff}]
Assume
$$\sum_{k=1}^{\infty}{V_{\Omega^c_k(z_0)}(z_0)}<+\infty.$$
We are going to prove the non regularity of the  boundary point $z_0$. The assumption implies that for every $\varepsilon>0$, there exists $p\in \enne$ such that 
\begin{equation*}  \sum_{k=p}^{\infty}{V_{\Omega^c_k(z_0)}(z_0)}<\varepsilon.\end{equation*} 
On the other hand, with the notations of the previous lemma, for any positive $r$
$$G_r^p\subseteq \bigcup_{k=p}^\infty \Omega_k^c(z_0),$$
so that,
$$V_{G_r^p} (z_0) \le \sum_{k=p}^{\infty}{V_{\Omega^c_k(z_0)}(z_0)}<\varepsilon.$$
Then, from Lemma \ref{lochiamiamolemma?}, we get $\lim_{r\tende 0} V_{G_r} (z_0) \le \varepsilon$ for every $\varepsilon>0$, which implies 
$$\lim_{r\ttende 0} V_{G_r}(z_0)= 0.$$  Hence, by \eqref{regularpoint}, the boundary point $z_0$ is not $\elle$-regular.
\end{proof}

\section{Sufficient condition for regularity}\label{cap}
In this section we prove the {\it if} part of Theorem \ref{iff}. This is the core of our main result and requires three lemmas.

\begin{lemma}\label{per0}
Suppose we have a sequence of compact sets $\{F_k\}_{k\in\N}$ in $\RNu$ such that
$$\begin{cases}
F_k\cap F_h=\emptyset & \text{ if }k\neq h,\\
\forall r>0\,\,\, \exists \,\bar{k} \,\,\mbox{ such that }\,\, F_k\subseteq G_r &  \text{for }k\geq\bar{k}.
\end{cases}$$
Suppose also that the following two conditions hold true:
\begin{itemize}

\item[($i$)]\label{i}$$\sum_{k=1}^{+\infty}{V_{F_k}(z_0)}=+\infty;$$
\item[($ii$)]\label{ii}$$\sup_{h\neq k}\sup{\left\{\frac{\Gamma(z,\zeta)}{\Gamma(z_0,\zeta)}\,:\,z\in F_h,\,\zeta\in F_k\right\}}\leq M_0.$$
\end{itemize}
Then we have
$$V_{G_r}(z_0)\geq \frac{1}{2M_0}\quad\, \mbox{for every positive }r.$$
\end{lemma}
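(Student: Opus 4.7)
The plan is to build, from the divergent series in (i), an $\elle$-potential normalized to equal $1$ at $z_0$ and uniformly bounded on its support, and then to compare it with $V_{G_r}$ by a domination argument. Fix $r>0$ and, by the second set-theoretic hypothesis, choose $\bar k\in\enne$ such that $F_k\subseteq G_r$ for all $k\geq\bar k$. I would introduce the partial sum $\Sigma_N := \sum_{k=\bar k}^{N} V_{F_k}(z_0)$, which diverges as $N\to\infty$ by~(i), and the normalization
\[
\phi_N := \frac{1}{\Sigma_N}\sum_{k=\bar k}^{N} V_{F_k}.
\]
The function $\phi_N$ is nonnegative and $\elle$-superharmonic with $\phi_N(z_0)=1$, and by~\eqref{reprgamma} it coincides with the $\elle$-potential $\Gamma_{\nu_N}$ of the nonnegative Radon measure $\nu_N:=\Sigma_N^{-1}\sum_{k=\bar k}^N \mu_{F_k}$, whose support lies in the compact set $K_N:=\bigcup_{k=\bar k}^N F_k\subseteq G_r$.

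Next, I would exploit disjointness and (ii) to control $\phi_N$ on its support. For $z\in F_h$ with $\bar k\leq h\leq N$ and any $k\neq h$, integrating the pointwise bound $\Gamma(z,\zeta)\leq M_0\,\Gamma(z_0,\zeta)$ against $d\mu_{F_k}(\zeta)$ gives $V_{F_k}(z)\leq M_0 V_{F_k}(z_0)$; combining with the trivial $V_{F_h}\leq 1$ and using the definition of $\Sigma_N$ yields
\[
\phi_N(z) \;\leq\; \frac{1}{\Sigma_N} + M_0 \;=:\; C_N \qquad\text{for every } z\in K_N.
\]
Because $\phi_N = \Gamma_{\nu_N}$ is $\elle$-harmonic in $\RNu\setminus K_N$ and vanishes at infinity (thanks to the decay of $\Gamma$), the maximum principle for $\elle$-harmonic functions (cf.~\cite[Proposition~2.3]{Cin}), applied on large homogeneous cylinders $\Cyl_R$ minus $K_N$ and then sending $R\to\infty$, extends the bound to $\phi_N\leq C_N$ on all of $\RNu$.

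The final step is the comparison with $V_{G_r}$. The normalized measure $\sigma_N:=C_N^{-1}\nu_N$ lies in $\emme(G_r)$ and has $\Gamma_{\sigma_N}\leq 1$ on $\RNu$, while $V_{G_r}=\Gamma_{\mu_{G_r}}$ equals $1$ quasi-everywhere on $G_r\supseteq K_N=\mathrm{supp}(\sigma_N)$. The classical principle of domination for $\elle$-potentials then delivers $V_{G_r}\geq \Gamma_{\sigma_N} = C_N^{-1}\phi_N$ throughout $\RNu$; evaluating at $z_0$ and letting $N\to\infty$,
\[
V_{G_r}(z_0)\;\geq\; \frac{\phi_N(z_0)}{C_N}\;=\; \frac{1}{M_0+1/\Sigma_N}\;\longrightarrow\;\frac{1}{M_0},
\]
which is stronger than the claimed $\tfrac{1}{2M_0}$. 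The main obstacles are the two potential-theoretic inputs used after the pointwise estimate on $K_N$: propagating the bound from $K_N$ to $\RNu$ via the $\elle$-max principle for potentials (which relies on $\Gamma$ going to zero at infinity) and the domination step, which has to be handled with care for the quasi-everywhere exceptional sets inherent in balayage theory.
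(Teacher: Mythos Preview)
Your argument is correct and follows essentially the same strategy as the paper: form the partial sum $\sum_{k} V_{F_k}$ over a finite range inside $G_r$, bound it on each $F_h$ via (ii) together with $V_{F_h}\le 1$, normalize, and compare with the balayage. The only real differences lie in how the two ``obstacles'' you flag are handled. For propagating the bound from $K_N$ to $\RNu$, the paper does not invoke a maximum principle for potentials; instead it uses the continuity of each $V_{F_k}$ away from $F_k$ to extend the bound (with a harmless extra $\sum 2^{-k}\le 1$) to an \emph{open} neighborhood $O\supseteq K_N$. For the comparison, the paper works with $V_{K_N}$ rather than $V_{G_r}$ directly: it takes an arbitrary competitor $u$ in the definition of the reduced function $W_{K_N}$, notes that $u-v$ is $\elle$-superharmonic on $\RNu\setminus K_N$ with nonnegative boundary $\liminf$ (here the bound $v\le 1$ on the open set $O$ is precisely what makes the boundary estimate clean), and concludes $u\ge v$ by the minimum principle---this sidesteps both the domination principle and the quasi-everywhere issue entirely. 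Both routes are valid and ultimately yield $V_{G_r}(z_0)\to 1/M_0$; the paper's is more self-contained, while yours leans on heavier but standard potential-theoretic machinery.
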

\begin{proof}
Let $A>\frac{2}{M_0}$, and fix any $r>0$. Let us pick $m,n\in\N$ with $m<n$ such that
$$\bigcup_{k=m}^n{F_k}\subseteq G_r\quad\mbox{ and }\quad \sum_{k=m}^nV_{F_k}(z_0)\geq A.$$
We are going to denote by $G_{m,n}=\bigcup_{k=m}^n{F_k}$ and by $W_{m,n}(z)=\sum_{k=m}^nV_{F_k}(z)$. We want to estimate $W_{m,n}$ on $G_{m,n}$.\\
Take $z\in G_{m,n}$. We have then $z\in F_h$ for some $h\in\{m,\ldots,n\}$. Of course we have $V_{F_h}(z)\leq 1$. On the other hand, if $k\neq h$ we get
$$V_{F_k}(z)=\int_{F_k}{\Gamma(z,\zeta)\,\rm{d}\mu_k(\zeta)}=\int_{F_k}{\frac{\Gamma(z,\zeta)}{\Gamma(z_0,\zeta)}\Gamma(z_0,\zeta)\,\rm{d}\mu_k(\zeta)}\leq M_0 V_{F_k}(z_0).$$
Hence $V_{F_k}\leq M_0 V_{F_k}(z_0)$ in $F_h$. By the continuity of the equilibrium potentials (outside of their relative compact sets) there exists an open neighborhood $O_h$ of $F_h$ such that
$$V_{F_k}\leq M_0 V_{F_k}(z_0)+\frac{1}{2^k}\quad\,\,\forall\,k\in\{m,\ldots,n\},\,k\neq h.$$
We put $O=\bigcup_h{O_h}$. In $O$ we get
$$W_{m,n}\leq 1+M_0\sum_{k=m}^n{V_{F_k}(z_0)}+\sum_{k=m}^n{\frac{1}{2^k}}\leq 2+M_0\sum_{k=m}^n{V_{F_k}(z_0)}.$$
If we consider the function $v_{m,n}=\frac{1}{2+M_0\sum_{k=m}^n{V_{F_k}(z_0)}}W_{m,n}$, we thus get $v_{m,n}\leq 1$ in $O$. Moreover, the function $v_{m,n}$ is a nonnegative $\H$-superharmonic in $\RNu$, it is $\H$-harmonic in $\RNu\smallsetminus G_{m,n}$, and it vanishes at the infinity. If we take any function $u\in \Phi_{G_{m,n}}$ we have 
$$\begin{cases}
u-v_{m,n}\in\overline{\HH}(\RNu\smallsetminus G_{m,n}),  & \,\\
\liminf_{z\rightarrow \infty}{u(z)-v_{m,n}(z)}\geq 0, &  \,\\
\liminf_{z\rightarrow\zeta\in\de G_{m,n}}{u(z)-v_{m,n}(z)}\geq u(\zeta)-1\geq0.&  \,
\end{cases}$$
The maximum principle infers that $u-v_{m,n}$ has to be nonnegative in $\RNu\smallsetminus G_{m,n}$. On the other hand, $u\geq 1\geq v_{m,n}$ in $G_{m,n}$. Therefore $u\geq v_{m,n}$ in $\RNu$, for every $u\in \Phi_{G_{m,n}}$. This implies that
$$V_{G_{m,n}}(z)\geq v_{m,n}(z)=\frac{W_{m,n}(z)}{2+M_0\sum_{k=m}^n{V_{F_k}(z_0)}}\quad\,\mbox{for all }z\in \RNu.$$
In particular this has to be true at $z=z_0$, i.e.
$$V_{G_{m,n}}(z_0)\geq \frac{\sum_{k=m}^n{V_{F_k}(z_0)}}{2+M_0\sum_{k=m}^n{V_{F_k}(z_0)}}.$$
Since the function $s\mapsto \frac{s}{2+M_0 s}$ is increasing, we deduce
$$V_{G_{m,n}}(z_0)\geq\frac{A}{2+M_0 A}>\frac{1}{2M_0}.$$
This concludes the proof since $V_{G_r}\geq V_{G_{m,n}}$.
\end{proof}

In order to simplify the notations, from now on we assume $z_0=0\in\de\Omega$. This is not restrictive because of the left-invariance property. We want to choose suitably the compact sets $F_k$ of the previous lemma. For any fixed $\l\in(0,1)$, we recall that
$$\O_{k}^c(0)=\left\{z \in \Omega^c \,:\,\left(\frac{1}{\l}\right)^{k\log{k}}\leq\Gamma(0,z) \leq\left(\frac{1}{\l}\right)^{(k+1)\log{(k+1)}}\right\}. $$
Now, we set $\alpha(k)=k\log{k}$ and  denote
$$T_k=\max_{(x,t)\in \O_{k}^c(0)}{-t}=\left(c_N\l^{\alpha(k)}\right)^{\frac{2}{Q}}.$$
We fix $q\in\N$ such that 
\begin{equation}\label{defq}
q\geq q_0:=4+\frac{m}{\log{\left(\frac{1}{\l}\right)}},\quad\mbox{where }m=\max{\left\{2,\frac{Q}{\log{6}},\frac{2\sigma_C^2}{\log{6}},\frac{Q\log{2}}{\log{8}},\frac{2Q\log{(\frac{n+1}{\sigma})}}{\log{8}}\right\}},
\end{equation}
and $\sigma_C, \sigma$ are the constants in \eqref{cnonc} and \eqref{confrhomnonhom}. We also denote by
$$p=1+\left[\frac{q}{2}\right]=1+\mbox{the integer part of }\frac{q}{2}.$$
So $\frac{q}{2}\leq p\leq 1 + \frac{q}{2} <q-1$. For any $k\in\N$ we want to consider the sets
$$\Ockq=\left\{z \in \Omega^c \,:\,\left(\frac{1}{\l}\right)^{\alpha(kq)}\leq\Gamma(0,z) \leq\left(\frac{1}{\l}\right)^{\alpha(kq+1)}\right\}.$$
Moreover, we put
\begin{equation}\label{defFk}
\Ockq=\left(\Ockq\cap \{t\geq -\Tst\}\right)\cup\left(\Ockq\cap \{t\leq -\Tst\}\right): =F_k^{(0)}\cup F_k
\end{equation}
where
$$\Tst=T_{kq+p}=\left(c_N\l^{\alpha(kq+p)}\right)^{\frac{2}{Q}}.$$
First we notice that, since $kq+p<q(k+1)$, $F_k$ lies strictly below $F_{k+1}$, namely
\begin{equation}\label{sotto}
\min_{(x,t)\in F_{h}}{t}=-T_{hq}>-\Tst=\max_{(\xi,\tau)\in F_{k}}{\tau}\qquad\forall h,k\in\N,\,\,h>k.
\end{equation}

\begin{lemma}\label{peri}
We have
$$\sum_{k=1}^{+\infty}{V_{F_k^{(0)}}(0)}<+\infty.$$
\end{lemma}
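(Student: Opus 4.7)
The plan is to show $V_{F_k^{(0)}}(0)$ decays in $k$ fast enough to guarantee summability. The starting point is the representation \eqref{reprgamma}: since $\zeta \in F_k^{(0)}\subset \Omega_{kq}^c(0)$ implies $\Gamma(0,\zeta)\leq \l^{-\alpha(kq+1)}$, we obtain
\begin{equation*}
V_{F_k^{(0)}}(0)=\int \Gamma(0,\zeta)\, d\mu_{F_k^{(0)}}(\zeta) \leq \l^{-\alpha(kq+1)}\,\capp (F_k^{(0)}).
\end{equation*}
The strategy is then to bound $\capp (F_k^{(0)})$ from above by exhibiting a homogeneous cylinder $\C_{r_k}(0)$ containing $F_k^{(0)}$, so that by properties $(ii),(iv),(v)$ of the capacity $\capp (F_k^{(0)})\leq r_k^Q\,\capp (\C_1(0))$.

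To determine $r_k$, take $(x,t)\in F_k^{(0)}$ and put $s=-t$. By construction $s\leq T^*_{kq}=T_{kq+p}$, which already controls the time coordinate. For the spatial coordinate, the lower bound $\Gamma(0,(x,t))\geq \l^{-\alpha(kq)}$ rewrites, using the explicit form of $\Gamma$, as
\begin{equation*}
|D_{1/\sqrt{s}}E(s)x|_C^2\leq \log c_N - \tfrac{Q}{2}\log s - \alpha(kq)\log\tfrac{1}{\l}.
\end{equation*}
By the commutation $D_{1/\sqrt s}E(s)=E(1)D_{1/\sqrt s}$ (see \eqref{commu}) and by \eqref{cnonc}, this yields $\sigma_C^2 |D_{1/\sqrt s}x|^2 \leq (\text{RHS})$, and then blockwise $|x^{(p_i)}|\leq \sigma_C^{-1}s^{(2i+1)/2}\sqrt{\log c_N-\tfrac{Q}{2}\log s-\alpha(kq)\log\tfrac{1}{\l}}$. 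Parametrizing $s=u T_{kq+p}$ with $u\in(0,1]$ and using the elementary estimate $u^{2i+1}\log(1/u)\leq 1/(e(2i+1))$, one gets $|x^{(p_i)}|^2\leq C\, T_{kq+p}^{2i+1}(\alpha(kq+p)-\alpha(kq))$. Summing the corresponding $1/(2i+1)$-th powers and estimating as in \eqref{confrhomnonhom}, this gives $\|x\|\leq \bar C\sqrt{T_{kq+p}(\alpha(kq+p)-\alpha(kq))}$, so we can take $r_k=\bar C\sqrt{T_{kq+p}(\alpha(kq+p)-\alpha(kq))}$.

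Combining with $T_{kq+p}^{Q/2}=c_N\l^{\alpha(kq+p)}$ gives
\begin{equation*}
V_{F_k^{(0)}}(0)\leq C\,\l^{\alpha(kq+p)-\alpha(kq+1)}\bigl(\alpha(kq+p)-\alpha(kq)\bigr)^{Q/2}.
\end{equation*}
Standard asymptotics yield $\alpha(kq+p)-\alpha(kq+1)\sim (p-1)\log(kq)$ and $\alpha(kq+p)-\alpha(kq)\sim p\log(kq)$, so the right-hand side is of order $k^{-(p-1)\log(1/\l)}(\log k)^{Q/2}$. From $p\geq q/2$ and $q\geq q_0=4+m/\log(1/\l)$ with $m\geq 2$ (by \eqref{defq}), it follows that $(p-1)\log(1/\l)\geq \log(1/\l)+m/2>1$, which makes the series converge. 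The main subtlety is the spatial-extent estimate: the bound on $|D_{1/\sqrt s}x|$ blows up as $s\to 0$, and the saving comes from the anisotropic factor $s^{(2i+1)/2}$, which controls the blow-up so that the spatial size of $F_k^{(0)}$ remains of order $\sqrt{T_{kq+p}}$ up to a $\sqrt{\log k}$ factor; the choice \eqref{defq} of $q$ is precisely what guarantees this logarithmic loss is absorbed by the geometric decay $\l^{(p-1)\log(kq)}$.
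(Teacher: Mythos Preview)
Your argument is correct and follows the same architecture as the paper's proof: bound $V_{F_k^{(0)}}(0)$ by $\l^{-\alpha(kq+1)}\capp(F_k^{(0)})$, enclose $F_k^{(0)}$ in a homogeneous cylinder $\C_{r_k}$ with $r_k\sim\sqrt{T_{kq+p}(\alpha(kq+p)-\alpha(kq))}$, and conclude by the asymptotics of $\alpha$ together with the choice of $q$. The only technical difference is in how you obtain the spatial bound: the paper passes through \eqref{confrhomnonhom} to control $\|D_{1/\sqrt{-t}}x\|$ and then invokes the monotonicity of $s\mapsto s\log^{\beta}(\alpha/s^Q)$ on a suitable interval (which is where part of the condition \eqref{defq} on $q$ enters), whereas you work blockwise and use the elementary bound $u^{2i+1}\log(1/u)\le 1/(e(2i+1))$ on $(0,1]$, which is slightly cleaner and does not require that particular threshold on $q$ for this step.
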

\begin{proof}
We are going to prove that $F_k^{(0)}$ is contained in a homogeneous cylinder $\Cyl_{r_k}$ so that
\begin{equation}\label{claimi}
\sum_{k=1}^{+\infty}{\left(\frac{1}{\l}\right)^{\alpha(kq+1)}r_k^Q}<+\infty.
\end{equation}
This is enough to prove the statement since
$$V_{F_k^{(0)}}(0)=\int_{F_k^{(0)}}{\Gamma(0,\zeta)\,\rm{d}\mu_{F_k^{(0)}}(\zeta)}\leq\left(\frac{1}{\l}\right)^{\alpha(kq+1)}\mathrm{cap\, } (F_k^{(0)}),$$
and by monotonicity and homogeneity we have $\mathrm{cap\, } (F_k^{(0)})\leq \mathrm{cap\, } (\Cyl_{r_k})= \mathrm{cap\, } (\Cyl_{1}) r_k^Q$. In order to prove \eqref{claimi}, we have to find a good bound for $r_k$.\\
Fix $z=(x,t)\in F_k^{(0)}$. Since in particular $z\in\Ockq$, we have
$$\left|D_{\frac{1}{\sqrt{-t}}}(E(-t)x)\right|_C^2\leq\log{\left(\frac{c_N\lambda^{\alpha(kq)}}{(-t)^{\frac{Q}{2}}}\right)}.$$
On the other hand, by \eqref{commu} and \eqref{cnonc}, we get
$$\left|D_{\frac{1}{\sqrt{-t}}}(E(-t)x)\right|_C^2=\left|E(1)D_{\frac{1}{\sqrt{-t}}}x\right|_C^2\geq\sigma^2_C \left|D_{\frac{1}{\sqrt{-t}}}x\right|^2$$
and then
\begin{equation}\label{boundM}
\left|D_{\frac{1}{\sqrt{-t}}}x\right|^2\leq\frac{1}{\sigma^2_C }\log{\left(\frac{c_N\lambda^{\alpha(kq)}}{(-t)^{\frac{Q}{2}}}\right)}.
\end{equation} 
Therefore, from \eqref{confrhomnonhom}, we deduce
\begin{eqnarray*}
\frac{1}{\sqrt{-t}}\left\|x\right\|&=&\left\|D_{\frac{1}{\sqrt{-t}}}x\right\|\leq (n+1)\max{\left\{\left|D_{\frac{1}{\sqrt{-t}}}x\right|, \left|D_{\frac{1}{\sqrt{-t}}}x\right|^{\frac{1}{2n+1}}\right\}}\\
&\leq&(n+1)\max{\left\{\frac{1}{\sigma_C}\log^{\frac{1}{2}}{\left(\frac{c_N\lambda^{\alpha(kq)}}{(-t)^{\frac{Q}{2}}}\right)}, \frac{1}{\sigma_C^{\frac{1}{2n+1}}}\log^{\frac{1}{2(2n+1)}}{\left(\frac{c_N\lambda^{\alpha(kq)}}{(-t)^{\frac{Q}{2}}}\right)}\right\}}.
\end{eqnarray*}
Let us remark that from our choice $\alpha(k)=k\log{k}$ we can check that the sequence
$$\alpha(kq+p)-\alpha(kq)\mbox{ is monotone increasing}.$$
In particular $\alpha(kq+p)-\alpha(kq)\geq\alpha(\frac{3}{2}q)-\alpha(q)\geq \frac{1}{2}q\log{(\frac{3}{2}q)}\geq\frac{1}{2}q\log{6}$. By our choice of $q$ \eqref{defq}, we have then $\alpha(kq+p)-\alpha(kq)\geq\frac{Q}{2\log{(\frac{1}{\l})}}$ and so
$$(\Tst)^{\frac{Q}{2}}=c_N\lambda^{\alpha(kq+p)}\leq c_N\lambda^{\alpha(kq)}e^{-\frac{Q}{2}}\qquad\forall k.$$
This fact and the fact that the functions $s\mapsto s\log^{\beta}\frac{\alpha}{s^Q}$ are increasing in the interval $(0,e^{-\beta}\alpha^{\frac{1}{Q}}]$ allow to bound the term $\left\|x\right\|$ further. Indeed, having $0<-t\leq\Tst$, we get
\begin{eqnarray*}
\left\|x\right\|&\leq&(n+1)\max{\left\{\frac{\sqrt{-t}}{\sigma_C}\log^{\frac{1}{2}}{\left(\frac{c_N\lambda^{\alpha(kq)}}{(-t)^{\frac{Q}{2}}}\right)}, \frac{\sqrt{-t}}{\sigma_C^{\frac{1}{2n+1}}}\log^{\frac{1}{2(2n+1)}}{\left(\frac{c_N\lambda^{\alpha(kq)}}{(-t)^{\frac{Q}{2}}}\right)}\right\}}\\
&\leq&(n+1)\max{\left\{\frac{\sqrt{\Tst}}{\sigma_C}\log^{\frac{1}{2}}{\left(\frac{c_N\lambda^{\alpha(kq)}}{(\Tst)^{\frac{Q}{2}}}\right)}, \frac{\sqrt{\Tst}}{\sigma_C^{\frac{1}{2n+1}}}\log^{\frac{1}{2(2n+1)}}{\left(\frac{c_N\lambda^{\alpha(kq)}}{(\Tst)^{\frac{Q}{2}}}\right)}\right\}}.
\end{eqnarray*}
Since $\frac{1}{2}q\log{6}\geq \frac{\sigma^2_C}{\log{(\frac{1}{\l})}}$ we have also $(\Tst)^{\frac{Q}{2}}\leq c_N\lambda^{\alpha(kq)}e^{-\sigma_C^2}$, which says $\log^{\frac{1}{2}}{\left(\frac{c_N\lambda^{\alpha(kq)}}{(\Tst)^{\frac{Q}{2}}}\right)}\geq \sigma_C$ and implies
$$\left\|x\right\|\leq\frac{(n+1)}{\sigma_C}\sqrt{\Tst}\log^{\frac{1}{2}}{\left(\frac{c_N\lambda^{\alpha(kq)}}{(\Tst)^{\frac{Q}{2}}}\right)}.$$
Summing up, we have just proved that
$$(x,t)\in F_k^{(0)}\qquad\Longrightarrow\quad\begin{cases}
\left\|x\right\|\leq\frac{n+1}{\sigma_C}\sqrt{\Tst}\log^{\frac{1}{2}}{\left(\frac{c_N\lambda^{\alpha(kq)}}{(\Tst)^{\frac{Q}{2}}}\right)}=:r_k & \text{ and }\\
0<-t\leq\Tst\leq (n+1)^2\Tst\leq r_k^2,&  \,
\end{cases}
$$
namely
$$F_k^{(0)}\subseteq \Cyl_{r_k}.$$
We are left with verifying \eqref{claimi} with this definition of $r_k$. We have thus to prove that
$$\sum_{k=1}^{+\infty}{\left(\frac{1}{\l}\right)^{\alpha(kq+1)-\alpha(kq+p)}\left(\alpha(kq+p)-\alpha(kq)\right)^{\frac{Q}{2}}}<+\infty.$$
The sequences $\alpha(kq+1)-\alpha(kq+p)$ and $\alpha(kq+p)-\alpha(kq)$ are asymptotically equivalent respectively to $(1-p)\log(kq+p)$ and $p\log(kq+p)$. Hence, the series is equivalent to
$$\sum_{k=1}^{+\infty}{\frac{1}{\left(kq+p\right)^{(p-1)\log\frac{1}{\l}}}\log^{\frac{Q}{2}}(kq+p)},$$
which is convergent since $p\geq\frac{q}{2}>1+\frac{1}{\log(\frac{1}{\l})}$. This proves \eqref{claimi}, and therefore the lemma.
\end{proof}

\begin{lemma}\label{perii}
There exists a positive constant $M_0$ such that
$$\frac{\Gamma(z,\zeta)}{\Gamma(0,\zeta)}\leq M_0 \quad\forall\,z\in F_h,\,\forall\,\zeta\in F_k,\quad\forall\,h,k\in\N,\,\,h\neq k.$$
\end{lemma}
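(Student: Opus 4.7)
The plan is to split on whether $h>k$ or $h<k$. The easy case is $h<k$: by \eqref{sotto}, every $z=(x,t)\in F_h$ and $\zeta=(\xi,\tau)\in F_k$ satisfy $t<\tau$, so $\zeta^{-1}\circ z$ has negative time component and $\Gamma(z,\zeta)=0$. The ratio is then $0$, which is trivially bounded.

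The substantive case is $h>k$. Here \eqref{sotto} gives $0>t>\tau$, putting us exactly in the hypothesis of Lemma~\ref{rapporto}; what remains is to verify the smallness of $\mu=\frac{-t}{-\tau}$ and to bound the exponential. From $z\in F_h\subseteq \Omega_{hq}^c$ we read off $(-t)\leq T_{hq}$, and from the definition of $F_k$ we get $(-\tau)\geq T^*_{kq+p}$, whence
$$\mu\leq \frac{T_{hq}}{T^*_{kq+p}}=\lambda^{\frac{2}{Q}(\alpha(hq)-\alpha(kq+p))}.$$
Since $h\geq k+1$ and $q>p$ with $q$ chosen as in \eqref{defq}, the exponent $\alpha(hq)-\alpha(kq+p)$ is large enough to force $\mu\leq\min\{1/2,\sigma^2/(n+1)^2\}$; this is precisely what the constants $m\geq Q\log 2/\log 8$ and $m\geq 2Q\log((n+1)/\sigma)/\log 8$ in \eqref{defq} are designed to guarantee. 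Lemma~\ref{rapporto} then yields $\frac{\Gamma(z,\zeta)}{\Gamma(0,\zeta)}\leq 2^{Q/2}e^{C\sqrt{\mu}M(z)M(\zeta)}$.

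Next I bound $M(z)$ and $M(\zeta)$ by repeating the computation \eqref{boundM} of the previous section. Since $F_h\subseteq \{t\leq -T^*_{hq+p}\}$, we have $(-t)^{Q/2}\geq c_N\lambda^{\alpha(hq+p)}$, so
$$M(z)^2\leq \frac{\log(1/\lambda)}{\sigma_C^2}\bigl(\alpha(hq+p)-\alpha(hq)\bigr),\qquad M(\zeta)^2\leq \frac{\log(1/\lambda)}{\sigma_C^2}\bigl(\alpha(kq+p)-\alpha(kq)\bigr).$$
Putting everything together, the problem reduces to establishing the uniform bound
$$\sup_{h>k}\;\lambda^{\frac{1}{Q}(\alpha(hq)-\alpha(kq+p))}\sqrt{\bigl(\alpha(hq+p)-\alpha(hq)\bigr)\bigl(\alpha(kq+p)-\alpha(kq)\bigr)}<+\infty.$$

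The main obstacle is controlling the last display. I plan to use the convexity of $\alpha(s)=s\log s$: its derivative $\log s + 1$ gives $\alpha(kq+p)-\alpha(kq)\leq p(\log(kq+p)+1)$ and, for $h\geq k+1$, $\alpha(hq)-\alpha(kq+p)\geq (q-p)(\log(kq+p)+1)$. Hence $\sqrt{\cdots}$ is dominated by a constant multiple of $p\sqrt{\log(hq)\log(kq)}$, while the prefactor is at most $\lambda^{\frac{q-p}{Q}\log(kq)}=(kq)^{-\frac{(q-p)\log(1/\lambda)}{Q}}$. Since $q$ is chosen so that $(q-p)\log(1/\lambda)/Q$ is strictly positive (indeed it exceeds $1$ by \eqref{defq}), the $(kq)^{-\text{const}}$ factor beats the $\log$-factors; a symmetric estimate with the roles of $h$ and $k$ played by $\lambda^{\frac{1}{Q}(\alpha(hq)-\alpha(hq-q+p))}$ handles the regime where $h$ is very large compared to $k$. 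The supremum is thus attained on a compact portion of the $(h,k)$-lattice and is finite, furnishing the required constant $M_0$.
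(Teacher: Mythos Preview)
Your argument is correct and follows the same strategy as the paper: dispose of $h<k$ trivially, then for $h>k$ bound $\mu$, invoke Lemma~\ref{rapporto}, control $M(z),M(\zeta)$ via \eqref{boundM}, and finally show $\mu M^2(z)M^2(\zeta)$ is uniformly bounded. A few small points: your notation $T^*_{kq+p}$ and $T^*_{hq+p}$ should read $T^*_{kq}$ and $T^*_{hq}$ (recall $T^*_{kq}=T_{kq+p}$); the subsequent formulas are nonetheless right. Your parenthetical that $(q-p)\log(1/\lambda)/Q>1$ is not guaranteed by \eqref{defq} (take $\lambda$ close to~$1$ and $Q$ large), but you only need positivity, which is immediate. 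Finally, your two separate estimates can be replaced by a single one: since $k<h$ gives $\log(kq)\le\log(hq)$, the ``symmetric'' bound alone already yields $S_{h,k}\lesssim (hq)^{-c}\log(hq)$, uniformly bounded in $h\ge 2$. The paper packages this same idea more cleanly by splitting $\alpha(hq)-\alpha(kq+p)\geq[\alpha(hq)-\alpha(hq-1)]+[\alpha(kq+q-1)-\alpha(kq+p)]$ and factoring $\mu M^2(z)M^2(\zeta)\le A_k\cdot B_h$ with both sequences tending to~$0$.
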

\begin{proof}
Fix any $h,k\in \N$ with $h\neq k$. If $h\leq k-1$, then $\Gamma(z,\zeta)=0$ and the statement is trivial. Thus, suppose $h\geq k+1$. For every $z=(x,t)\in F_h$ and $\zeta=(\xi,\tau)\in F_k$ we have
$$\mu=\frac{-t}{-\tau}\leq\frac{T_{hq}}{\Tst}=\left(\frac{\l^{\alpha(hq)}}{\l^{\alpha(kq+p)}}\right)^{\frac{2}{Q}}=\left(\frac{1}{\l}\right)^{\frac{2}{Q}(\alpha(kq+p)-\alpha(hq))}.$$
By monotonicity we have $\alpha(hq)-\alpha(kq+p)\geq\alpha(kq+q)-\alpha(kq+p)\geq\alpha(2q)-\alpha(q+p)\geq\alpha(2q)-\alpha(\frac{3}{2}q+1)\geq(\frac{q}{2}-1)\log{(2q)}$. By our choice of $q$ \eqref{defq} we have then
$$\alpha(hq)-\alpha(kq+p)\geq\left(\frac{q}{2}-1\right)\log{(8)}\geq\frac{Q}{2}\frac{\max{\{\log{2},\log{(\frac{n+1}{\sigma})^2}\}}}{\log{(\frac{1}{\l})}}$$
which implies $\mu\leq \min{\{\frac{1}{2},\frac{\sigma^2}{(n+1)^2}\}}$. This fact allows us to exploit Lemma \ref{rapporto} and get
$$\frac{\Gamma(z,\zeta)}{\Gamma(0,\zeta)}\leq\left(\frac{1}{1-\mu}\right)^{\frac{Q}{2}}e^{C\sqrt{\mu}M(z)M(\zeta)}\leq 2^{\frac{Q}{2}}e^{C\sqrt{\mu}M(z)M(\zeta)},$$
for some structural positive constant $C$. To prove the statement we need to show that the term
$$\mu M^2(z)M^2(\zeta)$$
is uniformly bounded for $z\in F_h$ and $\zeta\in F_k$. By estimating as in \eqref{boundM} we have
\begin{eqnarray*}
M^2(z)&=&\left|D_{\frac{1}{\sqrt{-t}}}x\right|^2\leq\frac{1}{\sigma^2_C }\log{\left(\frac{c_N\lambda^{\alpha(hq)}}{(-t)^{\frac{Q}{2}}}\right)}\\
&\leq& \frac{1}{\sigma^2_C }\log{\left(\frac{c_N\lambda^{\alpha(hq)}}{(T^*_{hq})^{\frac{Q}{2}}}\right)}=\frac{1}{\sigma^2_C }\log{\left(\frac{1}{\l}\right)}(\alpha(hq+p)-\alpha(hq)),
\end{eqnarray*}
and analogously
$$M^2(\zeta)\leq\frac{1}{\sigma^2_C }\log{\left(\frac{1}{\l}\right)}(\alpha(kq+p)-\alpha(kq)).$$
In order to bound $\mu M^2(z)M^2(\zeta)$ we are thus going to estimate the term
\begin{eqnarray*}
&&(\alpha(kq+p)-\alpha(kq))(\alpha(hq+p)-\alpha(hq))\left(\frac{1}{\l^{\frac{2}{Q}}}\right)^{(\alpha(kq+p)-\alpha(hq))}\\
&\leq&(\alpha(kq+p)-\alpha(kq))(\alpha(hq+p)-\alpha(hq))\left(\frac{1}{\l^{\frac{2}{Q}}}\right)^{( \alpha(kq+p)-\alpha(kq+q-1)+\alpha(hq-1)-\alpha(hq))}\\
&=&\left((\alpha(kq+p)-\alpha(kq))\left(\frac{1}{\l^{\frac{2}{Q}}}\right)^{( \alpha(kq+p)-\alpha(kq+q-1))}\right)\left((\alpha(hq+p)-\alpha(hq))\left(\frac{1}{\l^{\frac{2}{Q}}}\right)^{(\alpha(hq-1)-\alpha(hq))}\right)\\
&=:&A_k \cdot B_h.
\end{eqnarray*}
Since $p<q-1$ and $\alpha(n + s)-\alpha(n)$ is asymptotically equivalent to $s\log(n+s)$ as $n$ goes to $\infty$, it is easy to check that the sequences $A_k$ and $B_h$ are convergent to $0$. Therefore they are a fortiori bounded. This proves the lemma.
\end{proof}

\begin{proof}[Proof of sufficient condition in Theorem \ref{iff}] As we noticed, it is not restrictive to assume $z_0=0$. Then, our assumption implies 
$$\sum_{k=1}^{\infty}{V_{\Omega^c_k(0)}(0)}=+\infty$$
for some fixed $\l\in (0,1)$. Take $q\in\N$ as in \eqref{defq}. There exists at least one $i\in\{0,\ldots,q-1\}$ such that
$$\sum_{k=1}^{\infty}{V_{\Omega^c_{kq+i}(0)}(0)}=+\infty.$$
We can assume without loss of generality that $i=0$, i.e. 
$$\sum_{k=1}^{\infty}{V_{\Omega^c_{kq}(0)}(0)}=+\infty.$$
Let us split the sets $\Ockq$ as in \eqref{defFk}. In this way we have defined the sequence of compact sets $F_k$. We want to check that such a sequence satisfies the hypotheses of Lemma \ref{per0}.\\
First of all, from \eqref{sotto}, we have that the $F_k$'s are disjoint. Moreover, since $F_k\subset\Ockq$, it is easy to see that the sets converge from below to the point $0$ (e.g., using that $\Gamma(0,\cdot)$ is $\delta_r$-homogeneous of degree $-Q$). Lemma \ref{perii} provide the existence of a positive constant $M_0$ for which condition (\hyperref[ii]{$ii$}) in Lemma \ref{per0} holds true. The last assumption we have to verify is the condition (\hyperref[i]{$i$}). To do this, we recall that the subadditivity of the equilibrium potentials implies that
$$V_{\Ockq}\leq V_{F_k}+V_{F^{(0)}_k}.$$
Lemma \ref{peri} says that $\sum_{k}{V_{F^{(0)}_k}(0)}$ is convergent. We then deduce
$$\sum_{k=1}^{+\infty}{V_{F_k}(0)}=+\infty,$$
which is condition (\hyperref[i]{$i$}).\\
Then, we can apply Lemma \ref{per0} and infer that $V_{G_r}(0)\geq \frac{1}{2M_0}$ for all positive $r$. The regularity of the point $0$ is thus ensured by the characterization in \eqref{regularpoint}.
\end{proof}

\section{The Wiener-type test, and the cone condition}\label{proofw}

In this section we want prove Corollary \ref{corol}, and Corollary \ref{corol2}.\\
First, we want to show how one can deduce the Wiener-type test of Corollary \ref{corol} from Theorem \ref{iff}: it follows easily from the representation of the potentials \eqref{reprgamma}.
\proof[Proof of Corollary \ref{corol}] For every $k\in \N$, we denote by $\mu_k$ the $\elle$-equilibrium measure of 
$\Omega^c_k(z_0).$ Then, keeping in mind the very definition of $\Omega^c_k(z_0)$, we have:
\begin{eqnarray*} V_{\Omega^c_k(z_0)}(z_0) &=& \int_{\Omega^c_k(z_0)} \Gamma(z_0,\zeta)\ d\mu_k(\zeta)\ d\zeta \\
&\le&  {\left(\frac{1}{\lambda}\right)^{(k+1) \log{(k+1)}}} \mu_k (\Omega^c_k(z_0))= {\frac{\mathrm{cap\, }(\Omega^c_k(z_0))}{\l^{(k+1)\log{(k+1)}}}}.
\end{eqnarray*}
Analogously,
\begin{eqnarray*} V_{\Omega^c_k(z_0)}(z_0) \geq {\frac{\mathrm{cap\, } (\Omega^c_k(z_0))}{\l^{k\log{k}}}}.
\end{eqnarray*}
Hence,
\begin{equation*} \sum_{k=1}^{\infty}{\frac{\mathrm{cap\, } (\Omega^c_k(z_0))}{\l^{k\log{k}}}} \le   \sum_{k=1}^{\infty} V_{\Omega^c_k(z_0)}(z_0) \le 
 \sum_{k=1}^{\infty}{\frac{\mathrm{cap\, } (\Omega^c_k(z_0))}{\l^{(k+1) \log{(k+1)}}}}.
 \end{equation*} 
The assertions $(i)$ and $(ii)$ directly follow from these inequalities, and from Theorem \ref{iff}.
\endproof 

The main statement in Corollary \ref{corol2} follows from the sufficient condition $(i)$ we have just proved, and from \eqref{capmes}. In fact, we have
\begin{equation}\label{capmesterm}
\mathrm{cap\, } (\Omega^c_k(z_0))\geq\frac{|\Omega^c_k(z_0)|}{\left(c_N\lambda^{k\log{k}}\right)^{\frac{2}{Q}}}
\end{equation}
since $\Omega^c_k(z_0)\subset \RN\times [t_0-T_k,t_0]$ where we recall that $T_k^{\frac{Q}{2}}=c_N\lambda^{k\log{k}}$.\\ 
Finally, we have to deal with the proof of the cone condition. To this aim, we need some definitions. We call $\elle$-cone of vertex $0\in\RNu$ a set of the form
$$K_R(B):=\{(D_r(\xi),-r^2)\,:\,\xi\in B,\,\,0\leq r\leq R\}$$
for some bounded set $B\subset \RN$ with non-empty interior, and for some positive $R$. We call $\elle$-cone of vertex $z_0$ the left-translated cone
$$z_0\circ K_R(B).$$
\begin{definition}
Let $\Omega$ be a bounded open subset of $\erreu$ and $z_0 \in \de\Omega$. We say that $\Omega$ has the exterior $\elle$-cone property at $z_0$ if there exists an $\elle$-cone of vertex $z_0$ which is completely contained in $\Omega^c$.
\end{definition}
We can now complete the proof.
\begin{proof}[Proof of Corollary \ref{corol2}]
As we said, from \eqref{capmesterm} we get
$$\sum_{k=1}^{\infty}{\frac{\mathrm{cap\, } (\Omega^c_k(z_0))}{\l^{k\log{k}}}}\geq c_N^{-\frac{2}{Q}} \sum_{k=1}^{\infty}{\frac{|\Omega^c_k(z_0)|}{\,\,\l^{\frac{Q+2}{Q}k\log{k}}\,\,}}$$
and the first part of the proof follows. If we suppose that $\Omega$ has the exterior $\elle$-cone property at $z_0$, we want to prove that the series on the r.h.s. is divergent. In particular, we are going to prove that the terms of that series are uniformly bigger than a positive constant, for $k$ big enough.\\
Without loss of generality, we can assume $z_0=0$. Denote
$$F_r^{\theta}:=\left\{z\in\RNu\,:\, \frac{1}{r}\leq \Gamma(0,z)\leq\frac{\theta}{r}\right\},\quad\mbox{ for }r>0, \mbox{ and for }\theta>1,$$
and let $r_k=\lambda^{k\log{k}}$. For any $\theta>1$ there exists $\bar{k}$ such that we have
$$\Omega^c_k(z_0)\supseteq F_{r_k}^{\theta}\cap K_R(B)\qquad\forall k\geq\bar{k}.$$
On the other hand $$F_{r_k}^{\theta}\cap K_R(B)=\delta_{r_k^{\frac{1}{Q}}}\left(F_1^\theta\cap K_{R\,r_k^{-\frac{1}{Q}}}(B)\right).$$
We claim that there exist $\bar{k}_1\geq\bar{k}$ and a non-empty open set $A\subset \RNu$ such that 
\begin{equation}\label{claimcorol}
A\subseteq F_1^\theta\cap K_{R\,r_k^{-\frac{1}{Q}}}(B)\qquad \forall k\geq\bar{k}_1.
\end{equation}
If this is the case, we get
$$|\Omega^c_k(z_0)|\geq\left|\delta_{r_k^{\frac{1}{Q}}}\left(F_1^\theta\cap K_{R\,r_k^{-\frac{1}{Q}}}(B)\right)\right|=r_k^{\frac{Q+2}{Q}}|F_1^\theta\cap K_{R\,r_k^{-\frac{1}{Q}}}(B)|\geq r_k^{\frac{Q+2}{Q}}|A|$$
for all $k\geq\bar{k}_1$, which is exactly the desired relation
$$\frac{|\Omega^c_k(z_0)|}{\,\,\l^{\frac{Q+2}{Q}k\log{k}}\,\,}\geq |A|>0 \qquad \forall k\geq\bar{k}_1.$$
Hence, we are left with the proof of the claim \eqref{claimcorol}. Take $\bar{k}_1\geq\bar{k}$ such that
$$\sup_{\xi\in {\rm int}(B)}{\Gamma(0,(\xi,-1))}<\frac{R^Q}{r_k}\qquad\forall k\geq\bar{k}_1.$$
Consider 
$$A:=\left\{(D_\rho(\xi),-\rho^2)\,:\,\xi\in {\rm int}(B),\,\,\mbox{and }\frac{1}{\theta}\Gamma(0,(\xi,-1))<\rho^Q<\Gamma(0,(\xi,-1))\right\},$$
which is open, and non-empty since ${{\rm int}}(B)\neq\emptyset$ and $\theta>1$. Moreover $A\subset F^\theta_1$ by construction, and $A\subset K_{R\,r_k^{-\frac{1}{Q}}}(B)$ for $k\geq\bar{k}_1$ because of the inequality $\rho^Q<\frac{R^Q}{r_k}$. The proof is thus complete.
\end{proof}

\section*{Acknowledgments}

A.E.K. and G.T.  have been partially supported by the Gruppo Nazionale per l'Analisi Matematica, la Probabilit\`a e le
loro Applicazioni (GNAMPA) of the Istituto Nazionale di Alta Matematica (INdAM).

\bibliographystyle{amsplain}

\begin{thebibliography}{10}

\bibitem{CC} 
C. Constantinescu, A. Cornea, 
``Potential theory on harmonic spaces''. Springer-Verlag, New York-Heidelberg, 1972
 		
\bibitem{Cin}
C. Cinti, E. Lanconelli,
\textit{Riesz and Poisson-Jensen representation formulas for a class of ultraparabolic operators on Lie groups}. 
Potential Anal. 30 (2009) 179--200

\bibitem{EG}
L.C. Evans, R.F. Gariepy,
\textit{Wiener's criterion for the heat equation}.
 Arch. Rational Mech. Anal. 78 (1982) 293--314

\bibitem{GL90}
N. Garofalo, E. Lanconelli,
\textit{Level sets of the fundamental solution and Harnack inequality for degenerate equations of Kolmogorov type}.
Trans. Amer. Math. Soc. 321 (1990) 775--792

\bibitem{HJ}
R.A. Horn, C.R. Johnson,
``Matrix analysis''. Cambridge university press, Cambridge, 1990 

\bibitem{K16}
A.E. Kogoj,
\textit{On the Dirichlet Problem for hypoelliptic evolution equations: Perron-Wiener solution and a cone-type criterion}.
J. Differential Equations 262 (2017) 1524--1539

\bibitem{K82}
L.P. Kuptsov,
\textit{Fundamental solutions of certain degenerate second-order parabolic equations}.
Math. Notes 31 (1982) 283--289

\bibitem{L73}
E. Lanconelli,
\textit{Sul problema di Dirichlet per l'equazione del calore}.
Ann. Mat. Pura Appl. (4) 97 (1973) 83--114

\bibitem{LP} 
E. Lanconelli, S. Polidoro,  
\textit{On a class of hypoelliptic evolution operators}. 
Rend. Sem. Mat. Univ. Pol. Torino 52 (1994), 29--63

\bibitem{LTU}
E. Lanconelli, G. Tralli, F. Uguzzoni,
\textit{Wiener-type tests from a two-sided Gaussian bound}.
Ann. Mat. Pura Appl. (2016), \verb|doi:10.1007/s10231-016-0570-y| 

\bibitem{LU}
E. Lanconelli, F. Uguzzoni,
\textit{Potential analysis for a class of diffusion equations: a Gaussian bounds approach}.
J. Differential Equations 248 (2010) 2329--2367

\bibitem{La}
E.M. Landis,
\textit{Necessary and sufficient conditions for the regularity of a boundary point for the Dirichlet problem for the heat equation}.
 Dokl. Akad. Nauk SSSR 185 (1969) 517--520

\bibitem{Manfr}
M. Manfredini,
\textit{The Dirichlet problem for a class of ultraparabolic equations}.
Adv. Differential Equations 2 (1997) 831--866

\bibitem{Sc}
V. Scornazzani,
\textit{The Dirichlet problem for the Kolmogorov operator}.
Boll. Un. Mat. Ital. C (5) 18 (1981) 43--62

\end{thebibliography}

\end{document}